\newtheorem{theorem}{Theorem}[section]
\newtheorem{corollary}[theorem]{Corollary}
\newtheorem{proposition}[theorem]{Proposition}
\newtheorem{lemma}[theorem]{Lemma}
\theoremstyle{definition}
\theoremstyle{remark}
\newtheorem{remark}[theorem]{Remark}
\newtheorem{conjecture}[theorem]{Conjecture}
\newcommand{\Z}{\mathbb{Z}}
\newcommand{\R}{\mathbb{R}}
\newcommand{\C}{\mathbb{C}}
\newcommand{\bbC}{\mathbb{C}}
\newcommand{\bfM}{\boldsymbol{M}}
\newcommand{\bfSigma}{\boldsymbol{\Sigma}}
\newcommand{\calH}{\mathcal{H}}
\newcommand{\calI}{\mathcal{I}}
\newcommand{\calL}{\mathcal{L}}
\newcommand{\fraki}{\mathfrak{i}}
\newcommand{\fraku}{\mathfrak{u}}
\newcommand{\frakU}{\mathfrak{U}}
\renewcommand{\epsilon}{\varepsilon}
\renewcommand{\theta}{\vartheta}
\renewcommand{\phi}{\varphi}
\renewcommand{\Gamma}{\varGamma}
\renewcommand{\Sigma}{\varSigma}
\newcommand{\id}{\mathrm{id}}
\DeclareMathOperator{\Forall}{\forall}
\DeclareMathOperator{\bcs}{\natural}
\newcommand{\leqs}{\leqslant}
\newcommand{\geqs}{\geqslant}
\newcommand{\mods}[1]{\operatorname{\mathnormal{#1}-mod}}
\newcommand{\fsl}{\mathfrak{sl}}
\newcommand{\Mod}{\mathrm{Mod}}
\newcommand{\BM}{\mathrm{BM}}
\newcommand{\PGL}{\mathrm{PGL}}
\newcommand{\Hom}{\mathrm{Hom}}
\newcommand{\End}{\mathrm{End}}
\newcommand{\sig}{n}
\newcommand{\Cob}{\mathrm{Cob}}
\newcommand{\Tan}{\mathrm{Tan}}
\newcommand{\lk}{\mathrm{lk}}
\newcommand{\sqbinom}[2]{\left[ \begin{matrix} #1 \\ #2 \end{matrix} \right]}
\newcommand{\subalign}[1]{
  \vcenter{
    \Let@ \restore@math@cr \default@tag
    \baselineskip\fontdimen10 \scriptfont\tw@
    \advance\baselineskip\fontdimen12 \scriptfont\tw@
    \lineskip\thr@@\fontdimen8 \scriptfont\thr@@
    \lineskiplimit\lineskip
    \ialign{\hfil$\m@th\scriptstyle##$&$\m@th\scriptstyle{}##$\crcr
      #1\crcr
    }
  }
}
\def\clap#1{\hbox to 0pt{\hss#1\hss}}
\def\mathrlap{\mathpalette\mathrlapinternal}
\def\mathrlapinternal#1#2{%
\rlap{$\mathsurround=0pt#1{#2}$}}
\newcommand{\homol}{\calH}
\newcommand{\pic}[2][0]{\raisebox{-0.5\height + 2.5pt + #1pt}{\includegraphics{Pictures/#2.pdf}}}
\newcommand\arxiv[2]{\href{https://arXiv.org/abs/#1}{\texttt{arXiv:\allowbreak #1} #2}}
\newcommand\doi[2]{\href{https://doi.org/#1}{#2}}
\DeclareRobustCommand{\myuline}[1]{
 \ifmmode \text{\uline{$\phantom{#1}$}\llap{\contour{white}{$#1$}}}
 \else \uline{\phantom{#1}}\llap{\contour{white}{#1}} \fi
}
\newcommand{\transm}[1]{{\myuline{#1} \hspace*{1pt}}}
\def\namedlabel#1#2{\begingroup
    #2%
    \def\@currentlabel{#2}%
    \phantomsection\label{#1}\endgroup
}
\newcommand{\formv}{\mathbb{X}}
\newcommand{\cohom}{\vartheta}
\newcommand{\dmnsnl}[1]{$#1$-di\-men\-sion\-al}
\begin{document}

\raggedbottom

\title{Hennings TQFTs for Cobordisms Decorated With Cohomology Classes}

\author[M. De Renzi]{Marco De Renzi} 
\address{IMAG, Université de Montpellier, Place Eugène Bataillon, 34090 Montpellier, France}
\email{marco.de-renzi@umontpellier.fr}

\author[J. Martel]{Jules Martel} 
\address{AGM, Cergy Paris Université, Site de Saint-Martin, 2 avenue Adolphe Chauvin, 95300 Pontoise, France} 
\email{jules.martel-tordjman@cyu.fr}

\author[B. Wang]{Bangxin Wang}
\address{Institute of Mathematics, University of Zurich, Winterthurerstrasse 190, CH-8057 Zurich, Switzerland}
\email{bangxin.wang@math.uzh.ch}

\begin{abstract}
 Starting from an abelian group $G$ and a factorizable ribbon Hopf $G$-bialgebra $H$, we construct a TQFT $J_H$ for connected framed cobordisms between connected surfaces with connected boundary decorated with cohomology classes with coefficients in $G$. When restricted to the subcategory of cobordisms with trivial decorations, our functor recovers a special case of Kerler--Lyubashenko TQFTs, namely those associated with factorizable ribbon Hopf algebras. Our result is inspired by the work of Blanchet--Costantino--Geer--Patureau, who constructed non-semisimple TQFTs for admissible decorated cobordisms using the unrolled quantum group of $\fsl_2$, and by that of Geer--Ha--Patureau, who reformulated the underlying invariants of admissible decorated $3$-manifolds using ribbon Hopf $G$-coalgebras. Our work represents the first step towards a homological model for non-semisimple TQFTs decorated with cohomology classes that appears in a conjecture by the first two authors.
\end{abstract}

\maketitle
\setcounter{tocdepth}{3}

\section{Introduction}\label{S:intro}

The interplay between the algebraic structure of monoidal categories and the topological properties of low-di\-men\-sion\-al geometric objects provides a cornerstone for the field of \textit{quantum topology}. One way to frame this interaction is that quantum topology is a discipline that develops an algebraic approach, based on generators and relations, to low-dimensional topology. In particular, one of its purposes is to produce topological invariants of manifolds, and of maps between manifolds, out of algebras, their representations, and the monoidal categories they form. A more ambitious goal is typically to organize all these invariants into coherent structures known as a \textit{TQFTs} (\textit{Topological Quantum Field Theories}).

In this paper, we provide a TQFT construction that produces functors defined over categories of cobordisms decorated with cohomology classes. In order to do this, we fix an abelian group $G$, and we introduce the notion of a \textit{ribbon Hopf $G$-bialgebra} (see Section~\ref{S:ribbon_structure}), by generalizing Ohtsuki's colored ribbon Hopf algebras \cite{O93} and (the abelian case of) Virelizier's ribbon Hopf $G$-coalgebras \cite{V00}. Roughly speaking, such a ribbon Hopf $G$-bialgebra $H$ is given by a family of vector spaces $H_\alpha^\beta$ parametrized by two indices $\alpha, \beta \in G$. The direct sum 
\[
 H_0^\oplus = \bigoplus_{\alpha \in G} H_0^\alpha
\]
is naturally a ribbon Hopf algebra, and so is $H_0^0$. The main example of ribbon Hopf $G$-bialgebra we will consider in this paper arises from the quantum group of $\fsl_2$ with $G = \C/2\Z$, and is introduced in Section~\ref{S:quantum_sl2_algebra} by adapting the example discussed in \cite{GHP22}. We also consider the category $3\Cob^G$ of \textit{$G$-decorated connected framed cobordisms} (see Section~\ref{S:cobordisms}), whose morphisms come equipped with cohomology classes with $G$-coefficients. Here is our main result, see also Theorem~\ref{T:main}.

\begin{theorem}\label{T:introduction}
 Every factorizable\footnote{See Section~\ref{S:factorizability} for a definition.} ribbon Hopf $G$-bialgebra $H$ induces a braided monoidal functor
 \[
  J_H : 3\Cob^G \to \mods{H_{\mathrm{0}}^\oplus}.
 \]
\end{theorem}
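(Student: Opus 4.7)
The plan is to construct $J_H$ by descent along an explicit presentation of the decorated cobordism category $3\Cob^G$ by generators and relations. First, I would build up such a presentation by enhancing the Kerler-type surgery description of $3\Cob$ used in the undecorated case: connected framed cobordisms between connected surfaces with connected boundary can be encoded by bottom tangles in handlebodies, modulo a suitable set of local moves, and the cohomology decoration with coefficients in $G$ can be recorded by $G$-colorings on the tangle strands, on the 1-handles of the source/target surfaces, and on the surgery components. The additivity of cohomology along the core circles of handles forces these labels to satisfy conservation laws at each local piece, which is exactly what matches the $G$-indexing of the components $H_\alpha^\beta$ of the ribbon Hopf $G$-bialgebra.

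Next, I would define $J_H$ on generators by adapting the Hennings-style construction to the $G$-graded setting. Cups and caps are sent to evaluation/coevaluation in $\mods{H_0^\oplus}$; crossings colored by $\alpha, \beta \in G$ are sent to the appropriate graded component of the universal $R$-matrix of $H$; the twist uses the ribbon element in the prescribed grading; and surgery components are treated in the Hennings style by inserting the integral/cointegral of $H$ in the correct $G$-grade, which is exactly the data that a ribbon Hopf $G$-bialgebra provides (Section~\ref{S:ribbon_structure}). The $G$-bialgebra axioms are tailored so that these local assignments glue into well-defined $H_0^\oplus$-linear maps between $H_0^\oplus$-modules, which is the output required by the target category.

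Verifying that the assignment descends to a functor then reduces to checking a finite list of relations: Reidemeister and isotopy moves for tangles with $G$-labels, the compatibility with the cocycle constraints satisfied by the $G$-decorations, the handle-slide and stabilization moves coming from surgery, and the bridge/cancellation moves between $1$- and $2$-handles used in the handlebody presentation of cobordisms with boundary. Reidemeister invariance and pivotality follow from the ribbon axioms applied in each $G$-grade; stabilization is handled by the normalization of the Hennings invariant; and handle-slides are precisely the place where factorizability enters, via a $G$-graded version of the classical argument that the image of the integral under the adjoint action of the $M$-matrix spans $H$.

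The main obstacle, in my view, is precisely handle-sliding in the decorated setting. In the classical Hennings and Kerler--Lyubashenko story, factorizability of the ribbon Hopf algebra is exactly what makes the invariant slide-invariant; here one must show that factorizability for $H$ as a ribbon Hopf $G$-bialgebra is strong enough to yield slide-invariance once the sliding components carry arbitrary $G$-labels, and once the cocycle conditions on the cohomology decorations are imposed — the latter forcing a controlled shift in the $G$-grading of the involved integrals. Once this step is settled, monoidality and braidedness come essentially for free from the corresponding structure on $3\Cob^G$ and from the fact that the construction is built out of tensor products and of the graded $R$-matrix of $H$, so that the remaining work reduces to bookkeeping of the $G$-grading through each generator, which is exactly what the ribbon Hopf $G$-bialgebra framework is designed to facilitate.
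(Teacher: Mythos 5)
Your proposal follows essentially the same route as the paper: the authors replace $3\Cob^G$ by an equivalent braided monoidal category $\Tan^G$ of $G$-labeled top tangles in $G$-labeled handlebodies (with exactly the conservation laws on labels that you describe), define $J_H$ by a Hennings-style bead-insertion algorithm using the graded $R$-matrix, pivotal element, integral and cointegral, and verify invariance under framed $G$-Kirby moves, with monoidality and braiding immediate from the algorithm. The only small discrepancy is the bookkeeping of which axiom serves which move: in the paper the handle slide \eqref{E:K2} follows from the left-integral axiom alone, while factorizability --- your ``image of the integral under the $M$-matrix'' condition, i.e.\ $D_{\alpha,0}(\lambda_{-\alpha}\circ S_\alpha^0)=\Lambda_0^\alpha$ --- is what makes the cutting/stabilization move \eqref{E:K1} work.
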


As an immediate consequence, when restricted to the subcategory $3\Cob$ of cobordisms decorated with the trivial cohomology class, our functor recovers Kerler and Lyubashenko's TQFT associated with the factorizable ribbon Hopf algebra $H_0^0$, compare with Corollary~\ref{C:relation_with_KL}.

\begin{corollary}\label{C:introduction}
 If $H$ is a factorizable ribbon Hopf $G$-bialgebra, then the TQFT $J_H$ of Theorem~\ref{T:introduction} fits into the commutative diagram of braided monoidal functors
 \begin{center}
  \begin{tikzpicture}[descr/.style={fill=white}] \everymath{\displaystyle}
   \node (P0) at (0,0) {$3\Cob$};
   \node (P1) at (2,0) {$\mods{H_{\mathrm{0}}^{\mathrm{0}}}$};
   \node (P2) at (0,-1) {$3\Cob^G$};
   \node (P3) at (2,-1) {$\mods{H_{\mathrm{0}}^\oplus}$};
   \draw
   (P0) edge[->] node[above] {\scriptsize $J_{H_0^0}$}(P1)
   (P0) edge[->] node[left] {\scriptsize $\iota$} (P2)
   (P3) edge[->] node[right] {\scriptsize $\rho$} (P1)
   (P2) edge[->] node[below] {\scriptsize $J_H$} (P3);
  \end{tikzpicture}
 \end{center}
 where $J_{H_0^0}$ is the Kerler--Lyubashenko TQFT, $\iota$ is the inclusion functor, and $\rho$ is the restriction functor.
\end{corollary}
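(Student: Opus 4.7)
The plan is to verify the commutativity of the diagram separately on objects and on morphisms, exploiting the fact that a trivially $G$-decorated cobordism forces every ``colored'' piece of data appearing in the construction of $J_H$ to land in the neutral index $\alpha = 0$, hence inside $H_0^0$.

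First, I would unpack the action of $J_H$ on an object $\Sigma$ of $3\Cob$, viewed inside $3\Cob^G$ via $\iota$ as $\Sigma$ equipped with the trivial cohomology class in $H^1(\Sigma; G)$. By Theorem~\ref{T:introduction}, $J_H(\iota(\Sigma))$ carries an $H_0^\oplus$-module structure, and $(\rho \circ J_H \circ \iota)(\Sigma)$ is obtained by restricting this action along the subalgebra inclusion $H_0^0 \hookrightarrow H_0^\oplus$. The claim to check is that the resulting $H_0^0$-module agrees with the Kerler--Lyubashenko assignment $J_{H_0^0}(\Sigma)$ built from the factorizable ribbon Hopf algebra $H_0^0$. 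This reduces, once the construction of Section~\ref{S:ribbon_structure} is specialized to a trivial decoration, to the observation that every meridian and every generator of $H^1$ of the associated handlebody is labeled by $0 \in G$, so the only component of $H$ that contributes to the state space is $H_0^0$.

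Second, I would repeat the same analysis for a morphism $M \colon \Sigma \to \Sigma'$ of $3\Cob$, regarded via $\iota$ as a $G$-decorated cobordism carrying the trivial cohomology class in $H^1(M; G)$. Presenting $M$ via a standard decomposition (handle attachments and cylinders), the value $J_H(\iota(M))$ is computed from the ribbon Hopf $G$-bialgebra data of $H$ — coproducts, antipode, integrals, copairing, ribbon element — each evaluated in the component prescribed by the $G$-decoration. With trivial decoration, every such component is the neutral one, so the computation literally coincides, after applying $\rho$, with the Kerler--Lyubashenko evaluation for $H_0^0$.

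The main obstacle is bookkeeping: one has to match, piece by piece, the structural data of $H$ restricted to the index $\alpha = 0$ with the corresponding data of the factorizable ribbon Hopf algebra $H_0^0$, and verify that no contributions from the summands $H_0^\alpha$ with $\alpha \neq 0$ survive in the trivially decorated case. Once this compatibility is checked at the level of generators, functoriality of both $J_H$ and $J_{H_0^0}$, together with the strict monoidality of $\iota$ and $\rho$, upgrades it to the commutativity of the full diagram.
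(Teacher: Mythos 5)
Your overall strategy --- reduce to generators, check the two assignments agree there, and then invoke functoriality and monoidality of all four functors to propagate the agreement --- is exactly the structure of the paper's argument, and your observation that a trivial decoration forces every index to be $0$ so that all structural data lives in $H_0^0$ is the ``almost clear'' part of the proof. However, there is one point where your reasoning as written would fail: you assert that, for a trivially decorated cobordism, the computation of $J_H(\iota(M))$ \emph{literally coincides} with the Kerler--Lyubashenko evaluation for $H_0^0$. It does not. The Kerler--Lyubashenko algorithm (in the form of \cite[Section~4.2.4]{DM22} that the paper compares against) is formulated for \emph{unoriented} top tangles, whereas the algorithm defining $J_H$ is for \emph{oriented} ($0$-labeled) tangles and inserts orientation-dependent beads: antipodes on strands running against the preferred direction along a $1$-handle, and pivotal elements at right-oriented extrema. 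So even with all labels equal to $0$ the two recipes produce genuinely different intermediate bead configurations, and the non-trivial content of the corollary is precisely that they nevertheless yield the same linear maps.

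The good news is that your framework already contains the place where this must be checked: the ``compatibility at the level of generators'' step. The paper resolves the discrepancy by verifying agreement of the two algorithms on the oriented generators of Figure~\ref{F:generators} with all labels set to $0$ against their unoriented counterparts, and then concludes by the same monoidal-functoriality argument you propose. So your proof is repairable by replacing the claim of literal coincidence with an explicit generator-by-generator comparison of the oriented and unoriented conventions; as written, that comparison is the missing step.
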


\subsection{State of the art}

As their name suggests, TQFTs have a physical interpretation, and they were first axiomatized in mathematical terms by Atiyah \cite{A88}, who defined them, broadly speaking, as monoidal functors from categories of cobordisms to categories of modules over a given ring or field. Morphisms in categories of cobordisms are manifolds (possibly equipped with embedded links or other decorations) whose boundary is decomposed into an incoming part, or source, and an outgoing one, or target. The monoidal structure for cobordisms is traditionally induced by disjoint union, according to Atiyah's definition, although other meaningful frameworks exist (like the one considered in this paper, for instance). The first full-fledged construction of TQFTs was performed by Turaev \cite{T94} using \textit{semisimple modular categories}. Turaev's work generalized and extended previous results by Reshetikhin and Turaev \cite{RT91}, who first defined invariants of $3$-manifolds using semisimple quotients of categories of representations of ribbon Hopf algebras such as \textit{quantum groups}. Their construction was inspired by previous work of Witten on the Jones polynomial and Chern--Simons quantum field theory \cite{W89}, so the resulting invariants are now known as \textit{WRT invariants} (for \textit{Witten--Reshetikhin--Turaev}). Since the semisimplicity assumption played a crucial role in Turaev's construction, the associated functors are sometimes referred to as \textit{semisimple} TQFTs. The first construction of topological invariants of closed $3$-manifolds built from possibly non-semisimple ribbon Hopf algebras is due to Hennings \cite{H96}. In his approach, Hennings requires some assumptions on ribbon Hopf algebras, which need to be \textit{unimodular} and \textit{twist non-degenerate}. The resulting invariants were deeply generalized by Lyubashenko \cite{L94} to families of representations of mapping class groups of surfaces using potentially non-semisimple \textit{modular categories}, such as categories of representations of \textit{factorizable} ribbon Hopf algebras. Later, Kerler and Lyubashenko extended these mapping class group representations to \textit{EQTFTs} (\textit{Extended TQFTs}) for connected cobordisms \cite{KL01}. In particular, such ETQFTs contain TQFT functors defined on a category of cobordisms whose objects are connected surfaces with connected boundary, whose morphisms are connected cobordisms between them, and whose monoidal structure is induced by boundary connected sum. Such functors are called \textit{Kerler--Lyubashenko TQFTs} in this paper.

More recently, strong invariants of closed $3$-manifolds decorated with cohomology classes, known as \textit{CGP invariants}, were constructed by Costantino, Geer, and Patureau in \cite{CGP12}. This was done using the theory of \textit{modified traces} and \textit{relative modular categories}, which are a non-semisimple graded generalization of modular categories. These invariants were extended to TQFTs, known as \textit{BCGP TQFTs}, by Blanchet, Costantino, Geer, and Patureau \cite{BCGP14} for the unrolled quantum group of $\fsl_2$, and to ETQFTs by the first author \cite{D17} for general relative modular categories. Both approaches follow the \textit{universal construction} of Blanchet, Habegger, Masbaum, and Vogel \cite{BHMV95} to produce functors defined on categories of so-called \textit{admissible} cobordisms decorated with cohomology classes. By contrast with Kerler and Lyubashenko's framework, such categories feature disconnected objects, and disjoint union as tensor product. As we shall discuss later, cohomology classes should not be seen as a technical nuisance of the theory, but rather witness the richness of such invariants. Modified traces were later integrated in Hennings and Lyubashenko's constructions to obtain invariants, known as \textit{renormalized Hennings} and \textit{renormalized Lyubashenko} invariants respectively, that extend to disconnected admissible cobordisms on the level of both TQFTs \cite{DGGPR19} and ETQFTs \cite{D21}. More recently, constructions exploiting non-semisimple spherical categories were also developed, see \cite{CGPV23}.

\subsection{Main results}

As it is clear from the previous discussion, several approaches to the construction of non-semisimple TQFTs are now available. In the present work, we will essentially follow Kerler and Lyubashenko, but we will generalize their construction to obtain a monoidal functor defined on the category $3\Cob^G$ of connected framed cobordisms decorated with cohomology classes with $G$-coefficients. As our main tool, we will use a factorizable ribbon Hopf $G$-bialgebra $H$, see Theorem~\ref{T:main}. For cobordisms decorated with the trivial cohomology class, our functor restricts to the one associated by Kerler and Lyubashenko's construction to the factorizable ribbon Hopf algebra $H_0^0$, see Corollary~\ref{C:relation_with_KL}. 

To motivate our interest in this generalized construction, we highlight the fact that,  for the CGP invariants associated with the unrolled quantum group of $\fsl_2$, the dependence on a cohomology class with $\C/2\Z$-coefficients was crucially used to recover the classification of lens spaces \cite[Proposition~6.24]{BCGP14}. It is known that this cannot be done using WRT invariants, and it is currently not known whether this can be done using renormalized Lyubashenko invariants associated with the small quantum group of $\fsl_2$.

Furthermore, a TQFT functor automatically yields representations of mapping class groups of the objects of the corresponding cobordism category. It is well-known that representations  coming from semisimple TQFTs associated with quantum $\fsl_2$ are never faithful, since Dehn twists are always sent to matrices of finite order. This obstruction vanishes for representations arising from all non-simimple TQFTs associated with quantum $\fsl_2$. This observation makes them candidates for tackling the question of linearity of mapping class groups, since no non-trivial elements have currently been found in the kernels of these representations. Working with manifolds endowed with cohomology classes requires either looking at \textit{twisted representations} or restricting to diffeomorphisms that fix a given cohomology class. In first approximation, it is completely acceptable to restrict to representations of \textit{Torelli subgroups}, since their elements fix every cohomology class, and since such subrgoups are at the heart of the linearity problem (indeed, by their very definition, they constitute precisely the part of the mapping class group that is not detected by the standard homological representation). Since these groups are of finite type, finitely many different cohomology classes can in principle be used to detect finite families of generating diffeomorphisms. In such a situation, a continuity argument in $\C/2\Z$ could be used to deduce the existence of a single cohomology class detecting every element of the Torelli group. This observation motivates the interest in representations of Torelli groups that depend on the choice of a cohomology class with $\C/2\Z$-coefficients. We notice however that, in order to apply such a continuity argument, we should first prove that these representations depend continuously on the evaluation of the cohomology class against a finite basis of the homology of the surface.

In \cite{DM22}, the first two authors have proposed a completely different perspective on Lyubashenko's mapping class groups representations associated with small quantum $\fsl_2$, by developing a model based on twisted homology groups of configuration spaces. Such a model also provides a homological construction of the action of quantum $\fsl_2$. This framework is promising for studying faithfulness of mapping class group representations, since it is the same that was used by Bigelow \cite{B00} to prove linearity of braid groups. For punctured discs, the fact that homological representations are isomorphic to quantum representations (possibly decorated with cohomology classes) was proved in \cite{M20}. In \cite[Section~6.3.1]{DM22}, homological representations in higher genus are upgraded to twisted representations for surfaces endowed with cohomology classes, and \cite[Conjecture~6.6]{DM22} states that such representations should also arise from a non-semisimple TQFT construcion. The functor of Theorem~\ref{T:main} yields precisely the representations that, according to our expectation, should provide the quantum counterpart to such twisted homological representations. Now that these representations have actually been constructed, we rephrase the conjecture in Section~\ref{S:J_H_is_homological}, to be studied in a future paper. The homological setup pinpoints bases in which coefficients of these representations depend on the cohomology class as polynomials over the integers (see \cite[Section~6.3.1]{DM22}). This integrality property is a crucial strength of semisimple TQFTs, see for instance the discussion of \cite[Section~1.2]{DM22}. Its first appearance in a non-semisimple context is \cite[Corollary~6.3]{DM22}, and a polynomial dependence of BCGP representations in the coefficients of the cohomology class has not been established in general yet (although an analytic dependence has been observed in specific cases).

In the present paper, we use as our main tool for the construction a factorizable ribbon Hopf $G$-bialgebra $H$. In some sense, such an algebraic structure is the appropriate one for encoding cohomology classes with $G$-coefficients on cobordisms. The inspiration for this algebraic setup comes from the work of Virelizier \cite{V00} and Geer, Ha, and Patureau \cite{GHP22} on ribbon Hopf group-coalgebras. In order to construct a TQFT functor, the notion of a ribbon Hopf group-bialgebra is required, as explained in Section~\ref{S:Construction_of_the_functor}. In the case of unrolled quantum groups, an improved version of the invariants provided by our TQFTs was defined in \cite{GHP20} using modified integrals, and it was shown to recover the corresponding CGP invariants. Applying the universal construction to these invariants would thus immediately yield TQFT functors, as it was shown in \cite{D17}. However, the invariants of \cite{GHP20} can be extended to a larger category of admissible decorated cobordisms than the one used for BGCP TQFTs, by allowing embedded \textit{bichrome graphs} as defined in \cite[Section~5.1]{GHP20}. The TQFTs resulting from the application of the universal construction to this larger category of admissible decorated cobordisms are expected to be larger than BCGP TQFTs, and our functors would correspond to the restriction of these larger TQFTs to the category of connected cobordisms, by analogy with the relation established in the absence of decorations in \cite[Appendix~C]{DM22}.

\subsection{Structure of the paper}

In Section~\ref{S:cobordisms}, we introduce the category $3\Cob^G$ of connected framed cobordisms decorated with cohomology classes with coefficients in an abelian group $G$. This will provide the source of our TQFT functor. In Subsection~\ref{S:top_tangles}, we propose a diagrammatic model for these decorated cobordisms based on the category $\Tan^G$ of $G$-labeled top tangles in $G$-labeled handlebodies. We establish an equivalence between the categories $\Tan^G$ and $3\Cob^G$ in Subsection~\ref{S:handle_surgery_functors}.

In Section~\ref{S:Hopf_group-bialgebras}, we introduce the algebraic structure that we will use to define a braided monoidal functor on $\Tan^G$. Namely, we introduce the notion of \textit{Hopf group-bialgebras} in Section~\ref{S:Hopf_group-bialgebras}. We define \textit{ribbon structures} in Subsection~\ref{S:ribbon_structure}, and we introduce the condition of \textit{factorizability} in Subsection~\ref{S:factorizability}. These are the Hopf $G$-bialgebra version of the structure and properties required for the construction of Kerler and Lyubashenko's TQFTs.

In Section~\ref{S:Construction_of_the_functor}, we consider a factorizable Hopf $G$-bialgebra $H$, and we construct a braided monoidal functor $J_H : 3\Cob^G \to \mods{H_{\mathrm{0}}^\oplus}$. In order to do this, we present an algorithm to define the image of a $G$-labeled top tangle. In Subsection~\ref{S:invariance}, we show that the result is invariant under diagrammatic relations in $\Tan^G$ corresponding to $G$-labeled versions of Kirby moves for surgery presentations. This yields a proof of  Theorem~\ref{T:main}. In Subsection~\ref{S:relation_with_Lyubashenko}, we state and prove Corollary~\ref{C:relation_with_KL}, which shows that $J_H$ restricts to the Kerler--Lyubashenko TQFT when labels are all zero.

In Section~\ref{S:quantum_sl2_algebra}, we give an important example of a factorizable ribbon Hopf $G$-bialgebra arising from the quantum group of $\fsl_2$ for $G=\C/2\Z$. The TQFT functor it induces thus depends on cohomology classes with coefficients in the same group appearing in the construction of Blanchet--Costantino--Geer--Patureau. In Subsections~\ref{S:half-divided_basis}, \ref{S:Identities_in_hdb}, and \ref{S:ribbon_factorizability_sl2} we provide bases for state spaces and define ribbon structure, integrals, and cointegrals. Proposition~\ref{P:sl2_is_factorizable} shows that the resulting Hopf $G$-bialgebra is factorizable, and thus can be used to construct a TQFT functor. Finally, in Subsection~\ref{S:J_H_is_homological}, we recall the conjecture relating mapping class groups representations arising from $J_H$ in the case of $\fsl_2$ to homological representations.

\section{Decorated connected framed cobordisms}\label{S:cobordisms}

In this section, we fix an abelian group $G$, and we introduce the category $3\Cob^G$ of \textit{$G$-decorated connected framed cobordisms}. Decorations will be given by cohomology classes with $G$-coefficients, and the definition will generalize the category of connected cobordisms between connected surfaces with connected boundary introduced by Crane--Yetter and Kerler \cite{CY94, K01}.

Let $G$ be an abelian group. The \textit{category $3\Cob^G$ of $G$-decorated connected framed cobordisms} is defined as follows:
\begin{itemize}
 \item Objects of $3\Cob^G$ are triples $\bfSigma = (\varSigma,\vartheta,\calL)$ where
  \begin{itemize}
   \item $\varSigma$ is a connected surface with one boundary component;
   \item $\vartheta \in H^1(\varSigma;G)$ is a cohomology class;
   \item $\calL \subset H_1(\varSigma;\R)$ is a Lagrangian subspace with respect to the intersection form.
  \end{itemize}
 \item Morphisms of $3\Cob^G$ from $\bfSigma = (\varSigma,\vartheta,\calL)$ to $\bfSigma' = (\varSigma',\vartheta',\calL')$ are equivalence classes of triples $\bfM = (M,\omega,\sig)$, where 
  \begin{itemize}
   \item $M$ is a connected $3$-di\-men\-sion\-al cobordism from $\varSigma$ to $\varSigma'$;
   \item $\omega \in H^1(M;G)$ is a cohomology class satisfying $\iota_{-M}^*(\omega) = \vartheta$ and $\iota_{+M}^*(\omega) = \vartheta'$ for the inclusions $\iota_{-M} : \varSigma \hookrightarrow M$ and $\iota_{+M} : \varSigma' \hookrightarrow M$ determined by the boundary identifications;
   \item $\sig$ is an integer, called the \textit{signature defect}.
  \end{itemize}
  Two triples $\bfM = (M,\omega,\sig)$ and $\bfM' = (M',\omega',\sig')$ are \textit{equivalent} if $\sig = \sig'$, and there exists a diffeomorphism $f : M \to M'$ satisfying $f \circ \iota_{\pm M} = \iota_{\pm M'}$ and $f^*(\omega') = \omega$.
 \item The composition 
  \[
   \bfM' \circ \bfM \in 3\Cob^G(\bfSigma,\bfSigma'')
  \]
  of morphisms $\bfM \in 3\Cob^G(\bfSigma,\bfSigma')$ and $\bfM' \in 3\Cob^G(\bfSigma',\bfSigma'')$ is the equivalence class of the triple
  \[
   (M \cup_{\varSigma'} M', \omega \cup_{\vartheta'} \omega', \sig + \sig' - \mu(M_*(\calL),\calL',(M')^*(\calL''))),
  \]
  where:  
  \begin{itemize}
   \item $\omega \cup_{\vartheta'} \omega'$ is the unique cohomology class satisfying
    \[
     \iota_M^*(\omega \cup_{\theta'} \omega') = \omega, \qquad
     \iota_{M'}^*(\omega \cup_{\theta'} \omega') = \omega',
    \]
    for the inclusions $\iota_M : M \hookrightarrow M \cup_{\varSigma'} M'$ and $\iota_{M'} : M' \hookrightarrow M \cup_{\varSigma'} M'$, whose existence and uniqueness is guaranteed by the Mayer--Vietoris sequence
    \begin{center}
     \begin{tikzpicture}
      \node[right] (P0) at (0,0) {$\cdots$};
      \node[right] (P1) at (1.25,0) {$H^0(\varSigma';G)$};
      \node[left] (P2) at (8.75,0) {$H^1(M \cup_{\varSigma'} M';G)$};
      \node (P3) at (10,0) {};
      \node (P4) at (0,-1) {};
      \node[right] (P5) at (1.25,-1) {$H^1(M;G) \oplus H^1(M';G)$};
      \node[left] (P6) at (8.75,-1) {$H^1(\varSigma';G)$};
      \node[left] (P7) at (10,-1) {$\cdots$};
      \draw
      (P0) edge[->] (P1)
      (P1) edge[->] node[above] {\scriptsize $\partial^*$} (P2)
      (P2) edge[-] (P3)
      (P4) edge[->] node[above] {\scriptsize $(\iota_M^*,\iota_{M'}^*)$} (P5)
      (P5) edge[->] node[above] {\scriptsize $\iota_{+M}^* - \iota_{-M'}^*$} (P6)
      (P6) edge[->] (P7);
     \end{tikzpicture} 
    \end{center}
    by remarking that $\partial^*$ is the zero map;
   \item $M_*(\calL)$ and $(M')^*(\calL'')$ are Lagrangian subspaces of $H_1(\varSigma';\R)$ obtained by pushing forward $\calL$ through $M$ and by pulling back $\calL''$ through $M'$, respectively, and $\mu$ is the Maslov index, see \cite[Section~2.2]{BD21} for more details.
  \end{itemize}
 \item The identity
  \[
   \id_{\bfSigma} \in 3\Cob^G(\bfSigma,\bfSigma)
  \]
  of an object $\bfSigma \in 3\Cob^G$ is given by
  \[
   (\varSigma \times [0,1], \vartheta \times [0,1], 0),
  \]
  where $\vartheta \times [0,1] = \pi^*(\vartheta)$ for the projection $\pi : \varSigma \times [0,1] \to \varSigma$.
\end{itemize}

\begin{remark}\label{R:G=0_Cob}
 When $G = 0$, then $3\Cob^G$ is naturally isomorphic to Crane and Yetter's category $3\Cob$ of connected framed cobordisms between connected surfaces with connected boundary \cite{CY94, K01}.
\end{remark}

\subsection{Labeled top tangles in labeled handlebodies}\label{S:top_tangles}

In this section, we introduce the category $\Tan^G$ of \textit{$G$-labeled top tangles in $G$-labeled handlebodies}, which, as we will show, is equivalent to the category $3\Cob^G$. By contrast with $3\Cob^G$, though, $\Tan^G$ is naturally a braided monoidal category. It is a generalization of the category of bottom tangles in handlebodies introduced by Habiro \cite{H05}.

For every non-negative integer $g \geqs 0$, we specify a connected $3$-di\-men\-sion\-al handlebody $M_g \subset \R^3$ of genus $g$, obtained by attaching $g$ copies $B_1, \ldots, B_g$ of the $3$-di\-men\-sion\-al $1$-han\-dle $B = D^1 \times D^2$ to the bottom face $[0,1]^{\times 2} \times \{ 0 \}$ of the cube $[0,1]^{\times 3}$. We represent graphically $M_g$ through the projection to $\R \times \{ 0 \} \times \R$ as
\begin{align*}
 \pic{homology_basis}
\end{align*}
We denote by $\varSigma_g$ the connected surface of genus $g$ with one boundary component appearing as the bottom face of $M_g$, which is obtained from the square $[0,1]^{\times 2} \times \{ 0 \}$ by performing $1$-surgery along $g$ pairs of discs, and we denote by $\{ a_1,b_1,\ldots,a_g,b_g \}$ the set of curves represented above, which form a standard basis of $H_1(\varSigma_g)$.

A \textit{$G$-labeled handlebody} is a standard connected handlebody $M_g$ equipped with a $G$-labeling of its $1$-handles, given by a label $\beta_i \in G$ attached to its $1$-handle $B_i$ for every integer $1 \leqs i \leqs g$.

A \textit{$G$-labeled top tangle in an $G$-labeled handlebody}, sometimes simply called a \textit{top tangle}, is an oriented framed $G$-labeled tangle $T = A'_1 \cup \ldots A'_{g'} \cup C_1 \cup \ldots \cup C_h$ inside a connected $G$-labeled handlebody $M_g$ satisfying the following list of conditions:
\begin{itemize}
 \item The set of boundary points of $T$ is composed of $2g'$ points uniformly distributed on the top line $[0,1] \times \{ \frac 12 \} \times \{ 1 \} \subset M_g$;
 \item For every $1 \leqs j \leqs g'$, an arc component $A'_j$ of $T$ joins the $(2j)$th boundary point to the $(2j-1)$th one;
 \item For every $1 \leqs \ell \leqs h$, we have
  \begin{equation}
   \sum_{i=1}^g \lk(a_i,C_\ell) \beta_i + \sum_{j=1}^{g'} \lk(\hat{A}'_j,C_\ell) \alpha'_j + \sum_{k=1}^h \lk(C_k,C_\ell) \gamma_k = 0, \label{E:middle}
  \end{equation}
  where $\beta_i \in G$ is the label of the $1$-handle $B_i$ of $M_g$, where $\alpha'_j \in G$ is the label of the arc component $A'_j$ of $T$, whose plat closure is denoted $\hat{A}'_j$, where $\gamma_k \in G$ is the label of the circle component $C_k$ of $T$, and where $\lk(C_\ell,C_\ell)$ stands for the framing of $C_\ell$.
\end{itemize}
The source $s(T)$ of a top tangle $T$ is
\[
 s(T) = (\myuline{\alpha},\myuline{\beta}) = (\alpha_1,\beta_1,\ldots,\alpha_g,\beta_g) \in G^{\times 2g},
\]
where
\begin{align}
 \alpha_\ell &:= \sum_{j=1}^{g'} \lk(\hat{A}'_j,a_\ell) \alpha'_j + \sum_{k=1}^h \lk(C_k,a_\ell) \gamma_k \label{E:source}
\end{align}
and $\beta_\ell$ is the label of the $1$-handle $B_\ell$ of $M_g$ for every integer $1 \leqs \ell \leqs g$. The target $t(T)$ of a top tangle $T$ is
\[
 t(T) = (\myuline{\alpha'},\myuline{\beta'}) = (\alpha'_1,\beta'_1,\ldots,\alpha'_{g'},\beta'_{g'}) \in G^{\times 2g'},
\]
where $\alpha'_\ell$ is the label of the arc component $A'_\ell$ of $T$ and
\begin{align}
 \beta'_\ell &:= \sum_{i=1}^g \lk(a_i,\hat{A}'_\ell) \beta_i + \sum_{j=1}^{g'} \lk(\hat{A}'_j,\hat{A}'_\ell) \alpha'_j + \sum_{k=1}^h \lk(C_k,\hat{A}'_\ell) \gamma_k \label{E:target}
\end{align}
for every integer $1 \leqs \ell \leqs g'$.

Here is an example of a top tangle, together with its projection:
\begin{align*}
 \pic{top_tangle_example}
\end{align*}
In this example, the source is $(\alpha_1,\beta_1,\alpha_2,\beta_2)$ with
\[
 \alpha_1 = 2 \alpha'_2, \qquad
 \alpha_2 = \alpha'_1+\alpha'_2,
\]
the target is $(\alpha'_1,\beta'_1,\alpha'_2,\beta'_2,\alpha'_3,\beta'_3)$ with
\[
 \beta'_1 = \alpha'_2 + \alpha'_3, \qquad
 \beta'_2 = \alpha'_1 + \alpha'_2 + \alpha'_3 + \gamma, \qquad
 \beta'_3 = \alpha'_1 + \alpha'_2,
\] 
and $\alpha'_2 = 0$. Notice that, up to isotopy, we can always represent top tangles using \textit{regular diagrams}, that are diagrams in which every $1$-handle $D^1 \times D^2$ intersects the tangle $T$ in $D^1 \times P \subset D^1 \times D^2$, where $P \subset D^1$ is a finite set of points that remain distinct under the projection.

\begin{remark}
 We will see later that $G$-labeled top tangles correspond to \dmnsnl{3} cobordisms decorated with cohomology classes. Under this correspondence, $G$-labelings encode the evaluation of a cohomology class with coefficients in $G$ against \dmnsnl{1} homology classes determined by meridians of the tangle and by longitudes of the handles.
\end{remark}

We consider top tangles up to \textit{framed $G$-Kirby moves} of the following type:
\begin{align*}
 \pic{Kirby_moves_1a} &\leftrightsquigarrow \pic{Kirby_moves_1b} \tag{K$1$}\label{E:K1} \\
 \pic{Kirby_moves_2a} &\leftrightsquigarrow \pic{Kirby_moves_2b} \tag{K$2$}\label{E:K2} \\
 \pic{Kirby_moves_3a} &\leftrightsquigarrow \pic{Kirby_moves_3b} \tag{K$3$}\label{E:K3}
\end{align*}
These operations can be performed inside any solid torus $S^1 \times D^2$ arbitrarily embedded into $M_g$.

The \textit{category $\Tan^G$ of $G$-labeled top tangles in $G$-labeled handlebodies} is the category defined as follows:
\begin{itemize}
 \item Objects of $\Tan^G$ are finite sequences $(\myuline{\alpha},\myuline{\beta}) = (\alpha_1,\beta_1,\ldots,\alpha_g,\beta_g) \in G^{\times 2g}$ for $g \geqs 0$.
 \item Morphisms of $\Tan^G$ from $(\myuline{\alpha},\myuline{\beta}) \in G^{\times 2g}$ to $(\myuline{\alpha'},\myuline{\beta'}) \in G^{\times 2g'}$ are isotopy classes of top tangles $T$ with source $(\myuline{\alpha},\myuline{\beta})$ and target $(\myuline{\alpha'},\myuline{\beta'})$.
 \item The composition 
  \[
   T' \circ T \in \Tan^G((\myuline{\alpha},\myuline{\beta}),(\myuline{\alpha''},\myuline{\beta''}))
  \]
  of morphisms $T \in \Tan^G((\myuline{\alpha},\myuline{\beta}),(\myuline{\alpha'},\myuline{\beta'}))$, $T' \in \Tan^G((\myuline{\alpha'},\myuline{\beta'}),(\myuline{\alpha''},\myuline{\beta''}))$ is obtained by considering an open tubular neighborhood $N(\tilde{T})$ in $M_g$ of the subtangle $\tilde{T} = A_1 \cup \ldots \cup A_{g'}$ of $T$ composed of all arc components, by gluing vertically its complement $H_g \smallsetminus N(\tilde{T})$ to $M_{g'}$, identifying the top base of $M_g \smallsetminus N(\tilde{T})$ with the bottom base of $M_{g'}$ as prescribed by the top tangle $\tilde{T}$, and then by shrinking the result into $M_g$.
 \item The identity $\id_{(\myuline{\alpha},\myuline{\beta})} \in \Tan^G((\myuline{\alpha},\myuline{\beta}),(\myuline{\alpha},\myuline{\beta}))$ of an object $(\myuline{\alpha},\myuline{\beta}) \in \Tan^G$ is given by
  \[
   \id_{(\myuline{\alpha},\myuline{\beta})} := \pic{identity}
  \]
\end{itemize}
Here is an example of a composition of top tangles:
\begin{align*}
 \pic{composition_1} \circ \pic{composition_2} = \pic{composition_3}
\end{align*}
In this example, we are composing a top tangle whose source $(\alpha_1,\beta_1,\alpha_2,\beta_2)$ and target $(\alpha',\beta')$ satisfy
\[
 \alpha_1 = \alpha_2 = \alpha', \qquad
 \beta' = \beta_1 + \beta_2 - \alpha' + 2 \gamma, \qquad
 2 \alpha' + \gamma = 0
\]
with a top tangle whose source $(\alpha',\beta')$ and target $(\alpha''_1,\beta''_1,\alpha''_2,\beta''_2)$ satisfy
\[
 \alpha' = \alpha''_1 + \alpha''_2, \qquad
 \beta''_1 = \beta' + \alpha''_2, \qquad
 \beta''_2 = \beta' + \alpha''_1.
\]

The category $\Tan^G$ can be given the structure of a braided monoidal category:
\begin{itemize}
 \item The tensor product
  \[
   (\myuline{\alpha},\myuline{\beta}) \bcs (\myuline{\alpha'},\myuline{\beta'}) \in \Tan^G
  \]
  of objects $(\myuline{\alpha},\myuline{\beta}), (\myuline{\alpha'},\myuline{\beta'}) \in \Tan^G$ is given by their concatenation.
 \item The tensor product
  \[
   T \bcs T' \in \Tan^G((\myuline{\alpha},\myuline{\beta}) \bcs (\myuline{\alpha'},\myuline{\beta'}),(\myuline{\alpha''},\myuline{\beta''}) \bcs (\myuline{\alpha'''},\myuline{\beta'''}))
  \]
  of morphisms $T \in \Tan^G((\myuline{\alpha},\myuline{\beta}),(\myuline{\alpha''},\myuline{\beta''}))$, $T' \in \Tan^G((\myuline{\alpha'},\myuline{\beta'}),(\myuline{\alpha'''},\myuline{\beta'''}))$ is obtained by gluing horizontally $M_g$ to $M_{g'}$, identifying the right side of $M_g$ with the left side of $M_{g'}$ as prescribed by the identity map, and then by shrinking the result into $M_{g+g'}$.
 \item The tensor unit is the empty sequence $\varnothing \in \Tan^G$.
 \item The braiding $c_{(\myuline{\alpha},\myuline{\beta}),(\myuline{\alpha'},\myuline{\beta'})} \in \Tan^G((\myuline{\alpha},\myuline{\beta}) \bcs (\myuline{\alpha'},\myuline{\beta'}),(\myuline{\alpha'},\myuline{\beta'}) \bcs (\myuline{\alpha},\myuline{\beta}))$ of objects $(\myuline{\alpha},\myuline{\beta}), (\myuline{\alpha'},\myuline{\beta'}) \in \Tan^G$ is given by
  \[
   c_{(\myuline{\alpha},\myuline{\beta}),(\myuline{\alpha'},\myuline{\beta'})} := \pic{braiding}
  \]
\end{itemize}
Here is an example of a tensor product of top tangles:
\begin{align*}
 \pic[-10]{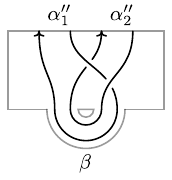} \bcs \pic[-10]{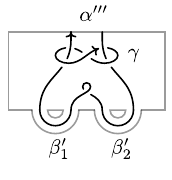} = \pic[-10]{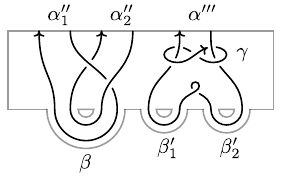}
\end{align*}
In this example, we are tensoring a top tangle whose source $(\alpha,\beta)$ and target $(\alpha''_1,\beta''_1,\alpha''_2,\beta''_2)$ satisfy
\[
 \alpha = \alpha''_1 + \alpha''_2, \qquad
 \beta''_1 = \beta + \alpha''_2, \qquad
 \beta''_2 = \beta + \alpha''_1
\]
with a top tangle whose source $(\alpha'_1,\beta'_1,\alpha'_2,\beta'_2)$ and target $(\alpha''',\beta''')$ satisfy
\[
 \alpha'_1 = \alpha'_2 = \alpha''', \qquad
 \beta''' = \beta'_1 + \beta'_2 - \alpha''' + 2 \gamma, \qquad
 2 \alpha''' + \gamma = 0.
\]

\begin{remark}\label{R:G=0_Tan}
 When $G = 0$, then $\Tan^G$ is naturally isomorphic to the category $\Tan$ of framed top tangles in handlebodies \cite{BD21, BBDP23}. Indeed, notice that, even though morphisms in $\Tan$ are given by unoriented tangles, there is always a unique orientation for arc components that is compatible with our convention for $\Tan^0$, and $0$-labeled closed components can be given any arbitrary orientation thanks to move~\eqref{E:K3}.
\end{remark}

\subsection{Equivalence of categories}\label{S:handle_surgery_functors}

We consider now the \textit{surgery} functor 
\[
 \chi : \Tan^G \to 3\Cob^G
\]
sending every object $(\myuline{\alpha},\myuline{\beta}) = (\alpha_1,\beta_1,\ldots,\alpha_g,\beta_g) \in G^{\times 2g}$ of $\Tan^G$ to the object
\[
 \bfSigma_{(\myuline{\alpha},\myuline{\beta})} := (\varSigma_g,\vartheta_{(\myuline{\alpha},\myuline{\beta})},\calL_g)
\]
of $3\Cob^G$, where 
\begin{itemize}
 \item $\vartheta_{(\myuline{\alpha},\myuline{\beta})} \in H^1(\varSigma_g;G)$ is the cohomology class determined by 
\[
 \vartheta_{(\myuline{\alpha},\myuline{\beta})}(a_i) = \alpha_i, \qquad
 \vartheta_{(\myuline{\alpha},\myuline{\beta})}(b_i) = \beta_i
\]
for every integer $1 \leqs i \leqs g$,
 \item $\calL_g \subset H_1(\varSigma_g;\R)$ is the Lagrangian subspace given by $\ker \iota_{-M_g}^*$ for the inclusion $\iota_{-M_g} : \varSigma_g \hookrightarrow M_g$,
\end{itemize}
and sending every morphism $T$ in $\Tan^G((\myuline{\alpha},\myuline{\beta}),(\myuline{\alpha'},\myuline{\beta'}))$ to the morphism 
\[ 
 \bfM(T) := (M(T),\omega_T,\sigma(T \smallsetminus \tilde{T}))
\]
in $3\Cob(\bfSigma_{(\myuline{\alpha},\myuline{\beta})},\bfSigma_{(\myuline{\alpha'},\myuline{\beta'})})$, where 
\begin{itemize}
 \item $M(T)$ is the cobordism obtained from $M_g$ by carving out an open tubular neighborhood $N(\tilde{T})$ in $H_m$ of the open subtangle $\tilde{T} = A'_1 \cup \ldots \cup A'_{g'}$ of $T$ composed of all arc components, and by performing $2$-surgery along the closed subtangle $T \smallsetminus \tilde{T} = C_1 \cup \ldots \cup C_h$ composed of all circle components,
 \item $\omega_T \in H^1(M(T);G)$ is the cohomology class determined by 
  \[
   \omega_T(b_i) = \beta_i, \qquad
   \omega_T(a'_j) = \alpha'_j, \qquad
   \omega_T(c_k) = \gamma_k
  \]
  for all integers $1 \leqs i \leqs g$, $1 \leqs j \leqs g'$, $1 \leqs k \leqs h$, where $a'_j$ and $c_k$ denote positive meridians of the arc component $A'_j$ and of the circle component $C_k$ of $T$, respectively, and where $\alpha'_j, \gamma_k \in G$ denote the respective labels;
 \item $\sigma(T \smallsetminus \tilde{T})$ is the signature of the linking matrix of $(T \smallsetminus \tilde{T}) \subset M_g \subset \R^3$.
\end{itemize}

\begin{proposition}\label{P:surgery_equivalence}
 The surgery functor $\chi : \Tan^G \to 3\Cob^G$ is an equivalence.
\end{proposition}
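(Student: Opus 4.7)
The plan is to prove essential surjectivity and full faithfulness separately, reducing the $G$-decorated setting to the known undecorated equivalence (Remarks~\ref{R:G=0_Cob} and~\ref{R:G=0_Tan}), where $\chi$ restricts to the classical surgery equivalence $\Tan \to 3\Cob$ established in \cite{BD21, BBDP23}. The $G$-labels on top tangles will be matched with cohomology classes on cobordisms by repeated applications of Mayer--Vietoris.

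For essential surjectivity, given an object $\bfSigma = (\Sigma, \vartheta, \calL)$, the classical case produces an integer $g$ and a diffeomorphism $f : \Sigma \to \Sigma_g$. Its mapping cylinder, equipped with the pushforward cohomology class, defines an isomorphism in $3\Cob^G$ that reduces the problem to the case $\Sigma = \Sigma_g$ with Lagrangian $\calL_g$; any residual change of Lagrangian is absorbed by another invertible cylinder cobordism whose signature defect accounts for the Maslov index. Setting $\alpha_i := \vartheta(a_i)$ and $\beta_i := \vartheta(b_i)$ for $1 \leqs i \leqs g$ then yields $\bfSigma \cong \chi((\myuline{\alpha}, \myuline{\beta}))$.

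For fullness, given $\bfM = (M, \omega, \sigma) \in 3\Cob^G(\bfSigma_{(\myuline{\alpha},\myuline{\beta})}, \bfSigma_{(\myuline{\alpha'},\myuline{\beta'})})$, I would first apply the classical equivalence to obtain an unlabeled top tangle $T_0$ with $M(T_0)$ diffeomorphic to $M$ as framed cobordisms. The Mayer--Vietoris sequence associated to the decomposition $M(T_0) = (M_g \smallsetminus N(\tilde{T}_0)) \cup N(T_0 \smallsetminus \tilde{T}_0)$ shows that $H_1(M(T_0);\Z)$ is generated by the handle longitudes $b_i$, the arc meridians $a'_j$, and the circle meridians $c_k$, subject to the relations obtained by expressing the longitudes of the surgery circles as combinations of these generators. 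Defining $\beta_i := \omega(b_i)$, $\alpha'_j := \omega(a'_j)$, $\gamma_k := \omega(c_k)$ produces a labeling of $T_0$, and relation~\eqref{E:middle} is automatically satisfied because $\omega$ descends to a well-defined class on $M(T_0)$. The prescribed restrictions $\iota_{-M}^*(\omega) = \vartheta$ and $\iota_{+M}^*(\omega) = \vartheta'$ force~\eqref{E:source} and~\eqref{E:target}, and the signature defect $\sigma$ is matched by stabilizing $T_0$ with $\pm 1$-framed unknots, which leaves the cohomology class invariant.

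For faithfulness, suppose $\chi(T) = \chi(T')$. The classical case provides a sequence of unlabeled Kirby moves connecting $T$ and $T'$ inside the appropriate handlebodies, and I would lift this to a sequence of labeled Kirby moves~\eqref{E:K1}--\eqref{E:K3} by showing that, at each step, the labels on the intermediate tangle are uniquely determined by the preserved cohomology class $\omega$ through the Mayer--Vietoris generators above, and that the relabelings prescribed by~\eqref{E:K1}--\eqref{E:K3} coincide with the induced change-of-basis formulas: \eqref{E:K1} introduces or removes a summand in $H_1$ whose generator is labeled by $0$ in view of~\eqref{E:middle}, \eqref{E:K2} implements the meridian substitution triggered by a handle slide, and \eqref{E:K3} realizes a meridian orientation reversal and the matching sign flip on labels. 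The main obstacle, which I would address by induction on the length of the move sequence, is to ensure that this lifting is coherent throughout: at each intermediate step the labeling must simultaneously satisfy~\eqref{E:middle}, restrict correctly to the boundary via~\eqref{E:source}--\eqref{E:target}, and track the signature shift produced by~\eqref{E:K1} so as to preserve the signature defect.
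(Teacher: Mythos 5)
Your proposal follows essentially the same route as the paper's proof: essential surjectivity from the classification of surfaces and the standard homology basis, fullness from a surgery presentation with labels read off from $\omega$ on meridians of the surgery link (with \eqref{E:middle} holding because the framed longitudes become null-homologous), and faithfulness from Kirby calculus relative to the boundary --- the paper simply delegates the labeled Kirby theorem to \cite[Theorem~1]{R97}, where you unpack the lifting of the unlabeled move sequence by hand, and your extra care with the signature defect and the Lagrangians is detail the paper glosses over. One small correction: in move \eqref{E:K1} the label of the component being introduced or removed need not be $0$; it is whatever \eqref{E:middle} forces, namely the linking-weighted sum of the labels of the strands it encircles.
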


\begin{proof}
 Since, for every $g \geqs 0$, the set of curves $\{ a_1,b_1,\ldots,a_g,b_g \}$ provides a basis of $H_1(\varSigma_g)$, then $\chi$ is essentially surjective on objects, as a direct consequence of the classification of surfaces. Furthermore, every cobordism $M$ from $\varSigma_g$ to $\varSigma_{g'}$ can be obtained from 
 \[
  M_0 := M \left( \pic{trivial} \right)
 \]
 by $2$-surgery along some framed link $L$, and every cohomology class $\omega \in H^1(M;G)$ determines a label $\omega(c_k) \in G$ for every positive meridian $c_k$ of every component $C_k$ of $L$. These labels have to satisfy Equation~\eqref{E:middle} because the longitude of $C_k$ determined by the framing is null-homologous in $M_0(L)$. This shows that $\chi$ is full. Finally, two $G$-labeled top tangles $T$ and $T'$ in $\Tan^G$ determine diffeomorphic cobordisms under $\chi$ if and only if they are related by a finite sequence of framed $G$-Kirby moves~\eqref{E:K1}--\eqref{E:K3}, as follows from \cite[Theorem~1]{R97}. This shows $\chi$ is faithful.
\end{proof}

\begin{remark}\label{R:generators}
 Thanks to Remarks~\ref{R:G=0_Cob} and \ref{R:G=0_Tan}, and to \cite[Theorem~2.2]{K01}, if $G = 0$, then $\Tan \cong \Tan^0$ is generated by the finite list of morphisms described in \cite[Equations~(46)--(50)]{K01}, compare with \cite[Figure~3.4.3]{BBDP23}. In fact, in this case, a complete algebraic presentation of $\Tan$ is given in \cite[Corollary~B \& Theorem~C]{BBDP23}. It is clear then that, for an arbitrary abelian group $G$, a list of generators for $\Tan^G$ is obtained by considering all possible $G$-labelings of generators of $\Tan$. In other words, the list of morphisms represented in Figure~\ref{F:generators}, for all $\alpha, \beta, \gamma \in G$, provides generators for $\Tan^G$.
\end{remark}

\begin{figure}[t]
 \includegraphics{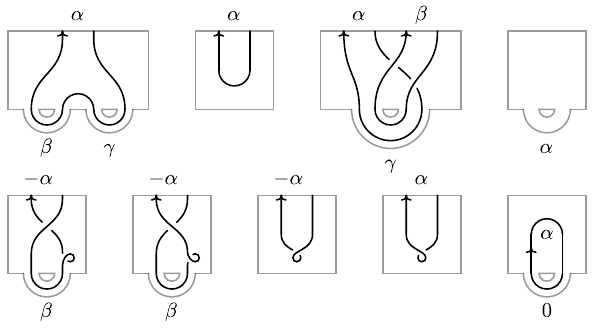}
 \caption{Generators of $\Tan^G$, with $\alpha, \beta, \gamma \in G$.}\label{F:generators}
\end{figure}

\section{Hopf group-bialgebras}\label{S:Hopf_group-bialgebras}

In this section, we introduce the notion of Hopf group-bialgebras, which generalize Ohtsuki's \textit{colored Hopf algebras} \cite{O93}, which are a special case of \textit{Hopf group-coalgebras}, as introduced by Turaev \cite{T10} and Virelizier \cite{V00}.

Let $G$ be an abelian group. A \textit{Hopf $G$-bialgebra} is a family $H = \{ H_\alpha^\beta \mid \alpha,\beta \in G \}$ of vector spaces over a field $\Bbbk$ equipped with
\begin{itemize}
 \item a \textit{product} $\mu = \{ \mu_\alpha^{\beta,\gamma} : H_\alpha^\beta \otimes H_\alpha^\gamma \to H_\alpha^{\beta+\gamma} \mid \alpha,\beta,\gamma \in G \}$,
 \item a \textit{unit} $\eta = \{ \eta_\alpha^0 : \Bbbk \to H_\alpha^0 \mid \alpha \in G \}$,
 \item a \textit{coproduct} $\Delta = \{ \Delta_{\alpha,\beta}^\gamma : H_{\alpha+\beta}^\gamma \to H_\alpha^\gamma \otimes H_\beta^\gamma \mid \alpha,\beta,\gamma \in G \}$,
 \item a \textit{counit} $\varepsilon = \{ \varepsilon_0^\alpha : H_0^\alpha \to \Bbbk \mid \alpha \in G \}$,
 \item an \textit{antipode} $S = \{ S_\alpha^\beta : H_\alpha^\beta \to H_{-\alpha}^{-\beta} \mid \alpha,\beta \in G \}$.
\end{itemize}
For all $\alpha,\beta,\gamma,\delta \in G$, these data satisfy
\begin{gather*}
 \mu_\alpha^{\beta+\gamma,\delta} \circ (\mu_\alpha^{\beta,\gamma} \otimes \id_{H_\alpha^\delta}) = \mu_\alpha^{\beta,\gamma+\delta} \circ (\id_{H_\alpha^\beta} \otimes \mu_\alpha^{\gamma,\delta}), \tag{H$1$}\label{E:ass} \\
 \mu_\alpha^{0,\beta} \circ (\eta_\alpha^0 \otimes \id_{H_\alpha^\beta}) = \id_{H_\alpha^\beta} = \mu_\alpha^{\beta,0} \circ (\id_{H_\alpha^\beta} \otimes \eta_\alpha^0), \tag{H$2$}\label{E:unit} \\
 (\Delta_{\alpha,\beta}^\delta \otimes \id_{H_{\gamma}^\delta}) \circ \Delta_{\alpha+\beta,\gamma}^\delta = (\id_{H_\alpha^\delta} \otimes \Delta_{\beta,\gamma}^\delta) \circ \Delta_{\alpha,\beta+\gamma}^\delta, \tag{H$3$}\label{E:coass} \\
 (\varepsilon_0^\beta \otimes \id_{H_\alpha^\beta}) \circ \Delta_{0,\alpha}^\beta = \id_{H_\alpha^\beta} = (\id_{H_\alpha^\beta} \otimes \varepsilon_0^\beta) \circ \Delta_{\alpha,0}^\beta, \tag{H$4$}\label{E:coun} \\
 \Delta_{\alpha,\beta}^{\gamma+\delta} \circ \mu_{\alpha+\beta}^{\gamma,\delta} = (\mu_\alpha^{\gamma,\delta} \otimes \mu_\beta^{\gamma,\delta}) \circ (\id_{H_\alpha^\gamma} \otimes \tau_{H_\beta^\gamma,H_\alpha^\delta} \otimes \id_{H_\beta^\delta}) \circ (\Delta_{\alpha,\beta}^\gamma \otimes \Delta_{\alpha,\beta}^\delta), \tag{H$5$}\label{E:prod-coprod} \\
 \varepsilon_0^{\alpha+\beta} \circ \mu_0^{\alpha,\beta} = \varepsilon_0^\alpha \otimes \varepsilon_0^\beta, \tag{H$6$}\label{E:prod-coun} \\
 \Delta_{\alpha,\beta}^0 \circ \eta_{\alpha+\beta}^0 = \eta_\alpha^0 \otimes \eta_\beta^0, \tag{H$7$}\label{E:unit-coprod} \\
 \varepsilon_0^0 \circ \eta_0^0 = \id_\Bbbk, \tag{H$8$}\label{E:unit-coun} \\
 \mu_\alpha^{-\beta,\beta} \circ (S_{-\alpha}^\beta \otimes \id_{H_\alpha^\beta}) \circ \Delta_{-\alpha,\alpha}^\beta = \eta_\alpha^0 \circ \varepsilon_0^\beta = \mu_\alpha^{\beta,-\beta} \circ (\id_{H_\alpha^\beta} \otimes S_{-\alpha}^\beta) \circ \Delta_{\alpha,-\alpha}^\beta, \tag{H$9$}\label{E:antip}
\end{gather*}
where $\tau_{H_\beta^\gamma,H_\alpha^\delta} : H_\beta^\gamma \otimes H_\alpha^\delta \to H_\alpha^\delta \otimes H_\beta^\gamma$ is the standard transposition. We will use the shorthand notation 
\[
 \eta_\alpha = \eta_\alpha^0, \qquad 1_\alpha = \eta_\alpha(1), \qquad \varepsilon^\alpha = \varepsilon_0^\alpha
\]
for every $\alpha \in G$,
\[
 xy = \mu_\alpha^{\beta,\gamma}(x \otimes y)
\]
for all $x \in H_\alpha^\beta$, $y \in H_\alpha^\gamma$, and
\begin{align*}
 z_{(1,\alpha_1)} \otimes z_{(2,\alpha_2)} 
 &= \Delta_{\alpha_1,\alpha_2}^\beta(z) & \mbox{ if } n=2 \\
 z_{(1,\alpha_1)} \otimes \ldots \otimes z_{(n,\alpha_n)}
 &= \mathrlap{z_{(1,\alpha_1)} \otimes \ldots \otimes z_{(n-2,\alpha_{n-2})} \otimes \Delta_{\alpha_{n-1},\alpha_n}^\beta(z_{(n-1,\alpha_{n-1}+\alpha_n)})} & \\
 &= \Delta_{\alpha_1,\alpha_2}^\beta(z_{(1,\alpha_1+\alpha_2)}) \otimes z_{(2,\alpha_3)} \otimes \ldots \otimes z_{(n-1,\alpha_n)} & \mbox{ if } n>2 
\end{align*}
for every $z \in H_{\alpha_1+\ldots+\alpha_n}^\beta$.

\begin{remark}\label{R:Hopf_G-coalgebra}
 If $H = \{ H_\alpha^\beta \mid \alpha,\beta \in G \}$ is a Hopf $G$-bialgebra, then $H_\alpha^0$ is an algebra and $H_0^\alpha$ is a coalgebra for every $\alpha \in G$. In particular, $H_0^0$ is a Hopf algebra. Furthermore, 
 \[
  H^\oplus = \left\{ H_\alpha^\oplus = \bigoplus_{\beta \in G} H_\alpha^\beta \Biggm\vert \alpha \in G \right\}
 \]
 is a Hopf $G$-coalgebra, and $H_\alpha^\oplus$ is an algebra for every $\alpha \in G$. We will write
 \[
  (H_\alpha^\beta)^\times = (H_\alpha^\oplus)^\times \cap H_\alpha^\beta, \qquad
  Z(H_\alpha^\beta) = Z(H_\alpha^\oplus) \cap H_\alpha^\beta
 \]
 for all $\alpha, \beta \in G$ to denote the intersection of $H_\alpha^\beta$ with the set of invertible elements and with the center of $H_\alpha^\oplus$, respectively.
\end{remark}

\begin{proposition}
 For all $\alpha, \beta, \gamma \in G$, the antipode satisfies
 \begin{gather}
  S_\alpha^{\beta+\gamma}(xy) = S_\alpha^\gamma(y) S_\alpha^\beta(x), \tag{H$10$}\label{E:antip-prod} \\
  S_\alpha^0(1_\alpha) = 1_{-\alpha} \tag{H$11$}\label{E:antip-unit} \\
  (S_{\alpha+\beta}^\gamma(z))_{(1,-\alpha)} \otimes (S_{\alpha+\beta}^\gamma(z))_{(2,-\beta)} = S_\alpha^\gamma(z_{(2,\alpha)}) \otimes S_\beta^\gamma(z_{(1,\beta)}), \tag{H$12$}\label{E:antip-coprod} \\
  \varepsilon^\alpha(S_0^\alpha(w)) = \varepsilon^{-\alpha}(w). \tag{H$13$}\label{E:antip-coun}
 \end{gather}
 for all $x \in H_\alpha^\gamma$, $y \in H_\beta^\gamma$, $z \in H_{\alpha+\beta}^\gamma$, $w \in H_0^\alpha$.
\end{proposition}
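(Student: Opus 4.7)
The plan is to adapt the classical convolution-based proofs of the antipode identities for a Hopf algebra to the $G$-graded setting of a Hopf $G$-bialgebra. Equivalently, as noted in Remark~\ref{R:Hopf_G-coalgebra}, the family $H^\oplus = \{H_\alpha^\oplus\}$ is a Hopf $G$-coalgebra in the sense of Virelizier~\cite{V00}, and the four identities are precisely the graded components of his anti-algebra, anti-coalgebra, unit- and counit-compatibility properties of the antipode. I will handle the easy identities \eqref{E:antip-unit} and \eqref{E:antip-coun} by direct computation from the axioms, and then tackle \eqref{E:antip-prod} and \eqref{E:antip-coprod} by the standard uniqueness-of-convolution-inverse argument.

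For \eqref{E:antip-unit}, the plan is to evaluate \eqref{E:antip} with $\beta = 0$ on the element $1_0 \in H_0^0$. By \eqref{E:unit-coprod} we have $\Delta_{-\alpha,\alpha}^0(1_0) = 1_{-\alpha} \otimes 1_\alpha$, and by \eqref{E:unit-coun} we have $\varepsilon_0^0(1_0) = 1$, so that \eqref{E:antip} combined with the unit axiom \eqref{E:unit} collapses to $S_{-\alpha}^0(1_{-\alpha}) = 1_\alpha$; relabeling $\alpha \leftrightarrow -\alpha$ gives \eqref{E:antip-unit}. For \eqref{E:antip-coun}, I apply the counit $\varepsilon_0^0$ to both sides of \eqref{E:antip} specialized to $\alpha = 0$; using \eqref{E:prod-coun} to distribute $\varepsilon_0^0$ across the product and \eqref{E:unit-coun} to simplify the right-hand side, the counit axiom \eqref{E:coun} then collapses the resulting expression to the desired equality.

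For \eqref{E:antip-prod}, introduce the two maps $\phi, \psi \colon H_\alpha^\beta \otimes H_\alpha^\gamma \to H_{-\alpha}^{-\beta-\gamma}$ defined by $\phi(x \otimes y) = S_\alpha^{\beta+\gamma}(xy)$ and $\psi(x \otimes y) = S_\alpha^\gamma(y) S_\alpha^\beta(x)$. The plan is to show that both $\phi$ and $\psi$ serve as the convolution inverse of the multiplication $\mu_\alpha^{\beta,\gamma}$ in an appropriate graded convolution algebra, whence $\phi = \psi$ by uniqueness. The key ingredient for $\phi$ is the product-coproduct compatibility \eqref{E:prod-coprod}, which rewrites $\Delta(xy)$ in terms of $\Delta(x)$ and $\Delta(y)$; combined with the antipode axiom \eqref{E:antip} this reduces the convolution to a scalar multiple of the unit. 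The key ingredient for $\psi$ is two applications of \eqref{E:antip}, once to $x$ and once to $y$, together with the unit axiom \eqref{E:unit}. Identity \eqref{E:antip-coprod} is proven dually, by showing that $\Delta_{-\alpha,-\beta}^{-\gamma} \circ S_{\alpha+\beta}^\gamma$ and $\tau \circ (S_\beta^\gamma \otimes S_\alpha^\gamma) \circ \Delta_{\beta,\alpha}^\gamma$ are both convolution inverses of $\Delta_{\alpha,\beta}^\gamma$, with respect to the tensor-product algebra structure on $H_{-\alpha}^\oplus \otimes H_{-\beta}^\oplus$.

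The main technical obstacle in \eqref{E:antip-prod} and \eqref{E:antip-coprod} is bookkeeping the graded indices: choosing compatible decompositions $\alpha = \alpha_1 + \alpha_2$ so that all intermediate coproducts, products, and antipodes compose meaningfully, and verifying that the graded convolution algebra really is well defined in the Hopf $G$-bialgebra setting (where the usual Hopf algebra convolution only applies directly when $\alpha = 0$). A cleaner alternative, which avoids this bookkeeping entirely, is to apply directly Virelizier's anti-algebra and anti-coalgebra properties of the antipode for Hopf $G$-coalgebras \cite{V00} to the Hopf $G$-coalgebra $H^\oplus$ of Remark~\ref{R:Hopf_G-coalgebra}: the four identities \eqref{E:antip-prod}--\eqref{E:antip-coun} are then just the graded components of the corresponding statements for $H^\oplus$.
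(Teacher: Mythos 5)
Your proposal is correct and matches the paper's own (very terse) proof, which simply says to adapt the classical convolution-inverse argument of \cite[Theorem~III.3.4.(a)]{K95} and points to \cite[Lemma~1.1]{V00} for the Hopf $G$-coalgebra version --- exactly the two routes you describe, with your explicit derivations of \eqref{E:antip-unit} and \eqref{E:antip-coun} from \eqref{E:antip}, \eqref{E:unit-coprod}, \eqref{E:unit-coun}, \eqref{E:prod-coun}, and \eqref{E:coun} filling in details the paper omits. Your observation that the four identities are the graded components of Virelizier's statements for $H^\oplus$ is the cleanest justification and is precisely what the paper's citation of \cite[Lemma~1.1]{V00} is gesturing at.
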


\begin{proof}
 Equations~\eqref{E:antip-prod}--\eqref{E:antip-coun} are the Hopf $G$-bialgebra version of \cite[Theorem~III.3.4.(a)]{K95}, and the reader can directly adapt the proof from there. Notice that the Hopf $G$-coalgebra version of these properties is \cite[Lemma~1.1]{V00}.
\end{proof}

\subsection{Ribbon structures}\label{S:ribbon_structure}

If $G$ is a ring such that $2 \in G^\times$, which justifies the notation $\frac 12 = 2^{-1} \in G^\times$, then a \textit{ribbon Hopf $G$-bialgebra} is a Hopf $G$-bialgebra $H = \{ H_\alpha^\beta \mid \alpha,\beta \in G \}$ equipped with
\begin{itemize}
 \item an \textit{R-matrix} $R = \{ R_{\alpha,\beta}^{\frac{\beta}{2},\frac{\alpha}{2}} = \displaystyle \sum_i (R'_i)_\alpha^{\frac{\beta}{2}} \otimes (R''_i)_\beta^{\frac{\alpha}{2}} \in (H_\alpha^{\frac{\beta}{2}} \otimes H_\beta^{\frac{\alpha}{2}})^\times \mid \alpha,\beta \in G \}$,
 \item a \textit{ribbon element} $v = \{ v_\alpha^{-\alpha} \in Z(H_\alpha^{-\alpha})^\times \mid \alpha \in G \}$.
 \end{itemize}
For all $\alpha,\beta,\gamma \in G$, these data satisfy:
\begin{gather*}
 \sum_i (R'_i)_\alpha^{\frac{\beta}{2}} x_{(1,\alpha)} \otimes (R''_i)_\beta^{\frac{\alpha}{2}} x_{(2,\beta)} = \sum_i x_{(2,\alpha)} (R'_i)_\alpha^{\frac{\beta}{2}} \otimes x_{(1,\beta)} (R''_i)_\beta^{\frac{\alpha}{2}}, \tag{R$1$}\label{E:R-intertw-coprod} \\
 \sum_i (R'_i)_\alpha^{\frac{\beta+\gamma}{2}} \otimes \Delta_{\beta,\gamma}^{\frac{\alpha}{2}}((R''_i)_{\beta+\gamma}^{\frac{\alpha}{2}}) = \sum_{j,k} (R'_j)_\alpha^{\frac{\gamma}{2}} (R'_k)_\alpha^{\frac{\beta}{2}} \otimes (R''_k)_\beta^{\frac{\alpha}{2}} \otimes (R''_j)_\gamma^{\frac{\alpha}{2}}, \tag{R$2$}\label{E:R-coprod-left} \\
 \sum_i \Delta_{\alpha,\beta}^{\frac{\gamma}{2}}((R'_i)_{\alpha+\beta}^{\frac{\gamma}{2}}) \otimes (R''_i)_\gamma^{\frac{\alpha+\beta}{2}} = \sum_{j,k} (R'_j)_\alpha^{\frac{\gamma}{2}} \otimes (R'_k)_\beta^{\frac{\gamma}{2}} \otimes (R''_j)_\gamma^{\frac{\alpha}{2}} (R''_k)_\gamma^{\frac{\beta}{2}}, \tag{R$3$}\label{E:R-coprod-right} \\
 (v_\alpha^{-\alpha})^2 = u_\alpha^{-\alpha} S_{-\alpha}^\alpha(u_{-\alpha}^\alpha), \tag{R$4$}\label{E:square-rib} \\
 \Delta_{\alpha,\beta}^{-(\alpha+\beta)}(v_{\alpha+\beta}^{-(\alpha+\beta)}) \hspace*{180pt} \\ 
 = \sum_{i,j} (v_\alpha^{-\alpha} S_{-\alpha}^{\frac{\beta}{2}}((R'_i)_{-\alpha}^{\frac{\beta}{2}}) (R''_j)_\alpha^{-\frac{\beta}{2}} \otimes v_\beta^{-\beta} (R''_i)_\beta^{-\frac{\alpha}{2}} S_{-\beta}^{\frac{\alpha}{2}}((R'_j)_{-\beta}^{\frac{\alpha}{2}})), \tag{R$5$}\label{E:coprod-rib} \\
 \varepsilon^0(v_0^0) = 1, \tag{R$6$}\label{E:coun-rib} \\
 S_\alpha^{-\alpha}(v_\alpha^{-\alpha}) = v_{-\alpha}^\alpha \tag{R$7$}\label{E:antip-rib}
\end{gather*}
for all $x \in H_{\alpha+\beta}^\gamma$ and $y \in H_\alpha^\beta$, and for the \textit{Drinfeld element}
\[
 u_\alpha^{-\alpha} := \sum_i S_{-\alpha}^{\frac{\alpha}{2}}((R''_i)_{-\alpha}^{\frac{\alpha}{2}}) (R'_i)_\alpha^{-\frac{\alpha}{2}}.
\]
The latter determines a \textit{pivotal element}
\[
 g = \{ g_\alpha^0 \in H_\alpha^0 \mid \alpha \in G \}
\]
defined by
\[
 g_\alpha^0 := u_\alpha^{-\alpha} (v^{-1})_\alpha^\alpha.
\]
We will adopt Einstein's notation by suppressing sums, and also use the shorthand notation $g_\alpha = g_\alpha^0$ for every $\alpha \in G$.

\begin{remark}
 If $H = \{ H_\alpha^\beta \mid \alpha,\beta \in G \}$ is a ribbon Hopf $G$-bialgebra, then:
 \begin{enumerate}
  \item Using the notation of Remark~\ref{R:Hopf_G-coalgebra}, $H^\oplus = \left\{ H_\alpha^\oplus \mid \alpha \in G \right\}$ is a ribbon Hopf $G$-coalgebra\footnote{Notice that our ribbon element $v$ corresponds to the inverse of Virelizier's $\vartheta$.} in the sense of \cite[Section~6.4]{V00}, and in particular $H_0^\oplus$ is a ribbon Hopf algebra;
  \item The antipode $S$ of a ribbon Hopf $G$-bialgebra is always invertible in the sense that $S_\alpha^\beta : H_\alpha^\beta \to H_{-\alpha}^{-\beta}$ is a linear isomorphism for all $\alpha, \beta \in G$, as a direct consequence of \cite[Lemma~6.5.(c)]{V00}.
 \end{enumerate}
\end{remark}

\begin{proposition}\label{P:R-matrices_pivotal_elements}
 For all $\alpha,\beta,\gamma \in G$, the R-matrix and the pivotal element satisfy
 \begin{gather}
  \sum_{i,j,k} (R'_i)_\alpha^{\frac{\beta}{2}} (R'_j)_\alpha^{\frac{\gamma}{2}} \otimes (R''_i)_\beta^{\frac{\alpha}{2}} (R'_k)_\beta^{\frac{\gamma}{2}} \otimes (R''_j)_\gamma^{\frac{\alpha}{2}} (R''_k)_\gamma^{\frac{\beta}{2}} \nonumber \\*
  \hspace*{\parindent} = \sum_{i,j,k} (R'_j)_\alpha^{\frac{\gamma}{2}} (R'_k)_\alpha^{\frac{\beta}{2}} \otimes (R'_i)_\beta^{\frac{\gamma}{2}} (R''_k)_\beta^{\frac{\alpha}{2}} \otimes (R''_i)_\gamma^{\frac{\beta}{2}} (R''_j)_\gamma^{\frac{\alpha}{2}}, \tag{R$8$}\label{E:YB} \\
  \sum_i \varepsilon^{\frac{\alpha}{2}} \left( (R''_i)_0^{\frac{\alpha}{2}} \right) (R'_i)_\alpha^0
  = 1_\alpha
  = \sum_i \varepsilon^{\frac{\alpha}{2}} \left( (R'_i)_0^{\frac{\alpha}{2}} \right) (R''_i)_\alpha^0, \tag{R$9$}\label{E:coun-R} \\
   \left( R_{\alpha,\beta}^{\frac{\beta}{2},\frac{\alpha}{2}} \right)^{-1} 
   = \sum_i S_{-\alpha}^{\frac{\beta}{2}} \left( (R'_i)_{-\alpha}^{\frac{\beta}{2}} \right) \otimes (R''_i)_\beta^{-\frac{\alpha}{2}} \nonumber \\*
   \hspace*{65pt}= \sum_i (R'_i)_\alpha^{-\frac{\beta}{2}} \otimes (S_\beta^{-\frac{\alpha}{2}})^{-1} \left( (R''_i)_{-\beta}^{\frac{\alpha}{2}} \right), \tag{R$10$}\label{E:inv-R} \\
  \sum_i S_\alpha^{\frac{\beta}{2}} \left( (R'_i)_\alpha^{\frac{\beta}{2}} \right) \otimes S_\beta^{\frac{\alpha}{2}} \left( (R''_i)_\beta^{\frac{\alpha}{2}} \right) 
  = R_{-\alpha,-\beta}^{-\frac{\beta}{2},-\frac{\alpha}{2}}, \tag{R$11$}\label{E:antip-R} \\
  (u^{-1})_\alpha^\alpha = (u_\alpha^{-\alpha})^{-1} = \sum_i (R'_i)_\alpha^{\frac{\alpha}{2}} S_{-\alpha}^{-\frac{\alpha}{2}}(S_\alpha^{\frac{\alpha}{2}}((R''_i)_\alpha^{\frac{\alpha}{2}})), \tag{R$12$}\label{E:inv-Drinf} \\
  u_\alpha^{-\alpha} x (u^{-1})_\alpha^\alpha = S_{-\alpha}^{-\beta} \left( S_\alpha^\beta (x) \right), \tag{R$13$}\label{E:conj-Drinf} \\
  \Delta_{\alpha,\beta}^0 \left( g_{\alpha+\beta} \right) 
  = g_\alpha \otimes g_\beta, \tag{R$14$}\label{E:coprod-piv} \\
  \varepsilon^0 \left( g_0 \right) 
  = 1, \tag{R$15$}\label{E:coun-piv} \\
  g_\alpha x g_\alpha^{-1} = S_{-\alpha}^{-\beta} \left( S_\alpha^\beta (x) \right) \tag{R$16$}\label{E:conj-piv}
 \end{gather}
 for every $x \in H_\alpha^\beta$.
\end{proposition}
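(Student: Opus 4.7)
The plan is to adapt the classical derivations of these identities for ribbon Hopf algebras (see~\cite{K95}), carefully tracking the $G$-grading shifts; the analogous statements in the Hopf $G$-coalgebra setting appear in~\cite{V00}, and via Remark~\ref{R:Hopf_G-coalgebra} they apply to the Hopf $G$-coalgebra $H^\oplus$ obtained from our Hopf $G$-bialgebra $H$. The proofs decompose naturally into two blocks: identities~\eqref{E:YB}--\eqref{E:antip-R} which concern only the R-matrix, and identities~\eqref{E:inv-Drinf}--\eqref{E:conj-piv} which concern the Drinfeld and pivotal elements.

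For the R-matrix block, I would derive the Yang--Baxter equation~\eqref{E:YB} by applying the intertwining axiom~\eqref{E:R-intertw-coprod} to $x = R_{\alpha,\beta}^{\frac{\beta}{2},\frac{\alpha}{2}}$, expanding the resulting coproducts via~\eqref{E:R-coprod-left} and~\eqref{E:R-coprod-right}. The counit identities~\eqref{E:coun-R} follow by applying $\varepsilon$ to a suitable leg of~\eqref{E:R-coprod-left} or~\eqref{E:R-coprod-right} after specializing one of the grading indices to $0$, so that the coproduct of $R$ collapses via~\eqref{E:coun}. The inversion formulas~\eqref{E:inv-R} come from pairing~\eqref{E:R-coprod-right} with the antipode axiom~\eqref{E:antip}, so that the left-hand side reduces to $1_\alpha \otimes 1_\beta$ by~\eqref{E:prod-coun} and~\eqref{E:unit-coun}, thereby characterizing $R^{-1}$; applying $S \otimes S$ to this formula and using~\eqref{E:antip-prod}--\eqref{E:antip-coprod} then yields~\eqref{E:antip-R}.

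For the Drinfeld and pivotal blocks, identity~\eqref{E:inv-Drinf} is a direct computation with the definition of $u$ using~\eqref{E:inv-R} and~\eqref{E:antip-R}, following the classical Drinfeld argument. For~\eqref{E:conj-Drinf}, I would expand $u \cdot x$ via~\eqref{E:R-intertw-coprod}, rearrange using~\eqref{E:inv-R}, and recognize the product $S^2(x) \cdot u$. Identity~\eqref{E:conj-piv} is then immediate from~\eqref{E:conj-Drinf}, centrality of $v$, and the definition $g_\alpha = u_\alpha^{-\alpha} (v^{-1})_\alpha^\alpha$. For~\eqref{E:coprod-piv}, I would compute $\Delta(g) = \Delta(u)\,\Delta(v^{-1})$, where $\Delta(v^{-1})$ is governed by~\eqref{E:coprod-rib} and $\Delta(u)$ can be derived from~\eqref{E:R-coprod-left}--\eqref{E:R-coprod-right}: the R-matrix cross-terms produced by these two coproducts cancel one another, leaving the group-like formula $g_\alpha \otimes g_\beta$. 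Finally, \eqref{E:coun-piv} follows from~\eqref{E:coun-rib} together with $\varepsilon^0(u_0^0) = 1$, which itself is a consequence of~\eqref{E:coun-R} applied to the definition of $u$.

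The principal obstacle throughout is the bookkeeping of the $G$-grading in identity~\eqref{E:conj-Drinf}, where one must simultaneously track the sign changes produced by the antipode $S_\alpha^\beta : H_\alpha^\beta \to H_{-\alpha}^{-\beta}$ and the degree assignments of the R-matrix factors. Once this step is settled, the remaining identities fall into place with only routine notational adjustments from the classical arguments, so the substance of the proof is essentially a careful degree-shift calculation rather than a structurally new argument.
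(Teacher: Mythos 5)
Your proposal is correct and follows essentially the same route as the paper, whose proof simply defers to the classical arguments of Kassel (and their Hopf $G$-coalgebra analogues in Ohtsuki, Virelizier, and Geer--Ha--Patureau) and asks the reader to track the grading shifts exactly as you describe. The only small slip is in your derivation of \eqref{E:inv-R}: the reduction of $(\eta\varepsilon\otimes\id)$ applied to the R-matrix to $1_\alpha\otimes 1_\beta$ uses \eqref{E:coun-R} rather than \eqref{E:prod-coun} and \eqref{E:unit-coun}, but this does not affect the argument.
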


\begin{proof}
  Equations~\eqref{E:YB}--\eqref{E:antip-R} are the Hopf $G$-bialgebra version of \cite[Theorem~VIII.2.4]{K95}, and the reader can directly adapt the proof from there. Notice that the Hopf $G$-coalgebra version of these properties is \cite[Proposition~1.4]{O93}. Equations~\eqref{E:inv-Drinf} and \eqref{E:conj-Drinf} are standard properties of the Drinfeld element, and can be adapted from \cite[Proposition~VIII.4.1]{K95}. Notice that the Hopf $G$-coalgebra version of these properties appears in \cite[Lemma 6.5]{V00}. Equations~\eqref{E:coprod-piv}--\eqref{E:conj-piv} are standard properties of the pivotal element, and can be adapted from \cite[Propositions~VIII.4.5 \& VIII.4.1]{K95} using the defining axioms of the ribbon element. Notice that the Hopf $G$-coalgebra version of these properties appears in \cite[Definition~2.1]{GHP22}.
\end{proof}

\subsection{Factorizability}\label{S:factorizability}

A ribbon Hopf $G$-bialgebra $H = \{ H_\alpha^\beta \mid \alpha,\beta \in G \}$ is \textit{factorizable} if it can be equipped with
\begin{itemize}
 \item a \textit{left integral} $\lambda = \{ \lambda_\alpha^0 : H_\alpha^0 \to \Bbbk \mid \alpha \in G \}$,
 \item a \textit{two-sided cointegral} $\Lambda = \{ \Lambda_0^\alpha : \Bbbk \to H_0^\alpha \mid \alpha \in G \}$.
\end{itemize}
For all $\alpha,\beta \in G$, these data are required to satisfy
\begin{gather*}
 \lambda_\beta^0(x_{(2,\beta)}) x_{(1,\alpha)} = \lambda_{\alpha+\beta}^0(x) 1_\alpha, \tag{I$1$}\label{E:int} \\
 y \Lambda_0^\beta = \varepsilon^\alpha(y) \Lambda_0^{\alpha+\beta}, \tag{I$2$}\label{E:coint} \\
 S_0^\alpha(\Lambda_0^\alpha) = \Lambda_0^{-\alpha}, \tag{I$3$}\label{E:antip-coint} \\
 \lambda_0^0(\Lambda_0^0) = 1, \tag{I$4$}\label{E:int-coint} \\
 D_{\alpha,0}^{\phantom{0}} (\lambda_{-\alpha}^0 \circ S_\alpha^0) = \Lambda_0^\alpha \tag{I$5$}\label{E:factorizable}
\end{gather*}
for all $x \in H_{\alpha+\beta}^0$ and $y \in H_0^\beta$, where
\begin{align*}
 D_{\alpha,\beta} : (H_\alpha^\beta)^* &\to H_\beta^\alpha \\*
 f &\mapsto f((R''_i)_\alpha^{\frac{\beta}{2}} (R'_j)_\alpha^{\frac{\beta}{2}})(R'_i)_\beta^{\frac{\alpha}{2}} (R''_j)_\beta^{\frac{\alpha}{2}}
\end{align*}
is the \textit{Drinfeld map}. We will use the shorthand notation 
\[
 \lambda_\alpha = \lambda_\alpha^0, \qquad \Lambda^\alpha = \Lambda_0^\alpha(1)
\]
for every $\alpha \in G$.

\begin{remark}\label{R:factorizable}
 If $H = \{ H_\alpha^\beta \mid \alpha,\beta \in G \}$ is a factorizable ribbon Hopf $G$-bialgebra, then:
 \begin{enumerate}
  \item $H_0^0$ is a factorizable ribbon Hopf algebra in the sense of \cite[Section~7]{H96}, thanks to \cite[Theorem~5]{K96};
  \item Every left integral $\lambda^\oplus = \{ \lambda_\alpha^\oplus : H_\alpha^\oplus \to \Bbbk \mid \alpha \in G \}$ for the Hopf $G$-coalgebra $H_\alpha^\oplus$, in the sense of \cite[Section~3.1]{V00}, determines a left integral $\lambda = \{ \lambda_\alpha : H_\alpha^0 \to \Bbbk \mid \alpha \in G \}$ for $H$, where $\lambda_\alpha$ is the restriction of $\lambda_\alpha^\oplus$ to $H_\alpha^0 \subset H_\alpha^\oplus$ for every $\alpha \in G$.
 \end{enumerate}
\end{remark}

\begin{proposition}\label{P:integrals}
 If $H = \{ H_\alpha^\beta \mid \alpha,\beta \in G \}$ is a factorizable ribbon Hopf $G$-bialgebra, then, for all $\alpha, \beta \in G$, the left integral satisfies
 \begin{gather}
  \lambda_\alpha(x_{(1,\alpha)}) x_{(2,\beta)} = \lambda_{\alpha+\beta}(x) g_\beta^2, \tag{I$6$}\label{E:unibalanced} \\
  \lambda_\alpha(yz) = \lambda_\alpha \left( z S_{-\alpha}^{-\beta} \left( S_\alpha^\beta(y) \right) \right), \tag{I$7$}\label{E:quantum-char} \\
  \lambda_\alpha ( w g_\alpha^{-1} ) = \lambda_{-\alpha} ( S_\alpha^0(w) g_{-\alpha}^{-1} ) \tag{I$8$}\label{E:antip-int}
 \end{gather}
 for all $x \in H_{\alpha+\beta}^0$, $y \in H_\alpha^\beta$, $x \in H_\alpha^{-\beta}$, $w \in H_\alpha^0$.
\end{proposition}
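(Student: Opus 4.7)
The three identities are the Hopf $G$-bialgebra analogs of classical properties of integrals in factorizable ribbon Hopf algebras: \eqref{E:unibalanced} says that $\lambda$ is a right integral up to the comodulus $g^2$; \eqref{E:quantum-char} expresses the $S^2$-twisted cyclicity of $\lambda$; and \eqref{E:antip-int} is a symmetry of $\lambda$ under $S$, twisted by $g^{-1}$. My plan is to adapt the classical proofs (see e.g.\ \cite[Chapter~VIII]{K95}, together with the related Hopf $G$-coalgebra arguments in \cite{V00, GHP22}) to the $G$-graded setting, keeping careful track of the grading shifts.

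For \eqref{E:unibalanced}, I would first show that the map
\[
\phi_{\alpha,\beta}(x) := \lambda_\alpha(x_{(1,\alpha)})\, x_{(2,\beta)}
\]
takes values in a one-dimensional subspace of $H_\beta^0$ spanned by a distinguished grouplike-type element $a_\beta$, so that $\phi_{\alpha,\beta}(x) = \lambda_{\alpha+\beta}(x)\, a_\beta$. This follows from coassociativity \eqref{E:coass} and the left-integral property \eqref{E:int}, by the standard argument identifying the image of $\phi_{\alpha,\beta}$ with the space of right integrals. The identification $a_\beta = g_\beta^2$ then comes from the ribbon structure and factorizability: iterating \eqref{E:conj-piv} yields $S_{-\alpha}^{-\beta} \circ S_\alpha^\beta \circ S_{-\alpha}^{-\beta} \circ S_\alpha^\beta (x) = g_\alpha^2\, x\, g_\alpha^{-2}$, while Radford's identification of $S^4$-conjugation with conjugation by the comodulus (using \eqref{E:factorizable} and \eqref{E:antip-coint}) forces $a_\beta = g_\beta^2$, with the proportionality scalar fixed via \eqref{E:int-coint}, \eqref{E:coprod-piv}, and \eqref{E:coun-piv}.

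For \eqref{E:quantum-char}, substituting \eqref{E:conj-piv} into the right hand side reduces the identity to the $g$-twisted cyclicity
\[
\lambda_\alpha(yz) = \lambda_\alpha(z\, g_\alpha\, y\, g_\alpha^{-1}),
\]
which follows from applying \eqref{E:int} to a suitable product, expanding via \eqref{E:prod-coprod}, and simplifying using the antipode identities \eqref{E:antip-prod}--\eqref{E:antip-coprod}; this mirrors the standard Hopf-algebra argument. For \eqref{E:antip-int}, I would combine \eqref{E:antip-coint} with \eqref{E:factorizable} and \eqref{E:unibalanced}: expressing both $\Lambda^\alpha$ and $\Lambda^{-\alpha}$ via the Drinfeld map applied to $\lambda_{\mp\alpha}\circ S_{\pm\alpha}^0$, and then using $S_0^\alpha(\Lambda^\alpha) = \Lambda^{-\alpha}$ together with the antipode behavior of the R-matrix \eqref{E:antip-R} and the grouplike property of $g$ from \eqref{E:coprod-piv}, a short rearrangement produces the stated formula.

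The main obstacle will be the identification $a_\beta = g_\beta^2$ in \eqref{E:unibalanced}: it requires carefully interweaving the R-matrix, ribbon, and (co)integral data in the $G$-graded setting, and pinpointing where factorizability enters to rigidify the Radford-type argument. Once this step is secured, the remaining identities \eqref{E:quantum-char} and \eqref{E:antip-int} follow through mostly routine manipulations.
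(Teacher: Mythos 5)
Your proposal is correct and takes essentially the same route as the paper, whose own proof is purely a pointer to the classical sources (Radford \cite[Theorems~10.2.2 \& 10.5.4]{R12}, Kauffman--Radford \cite{KR93}, Virelizier \cite{V00}) and leaves the graded adaptation to the reader: one-dimensionality of the space of integrals to produce the comodulus, Radford's $S^4$ formula plus the ribbon and factorizability data to identify it with $g^2$, and then the twisted cyclicity and $S$-invariance of the symmetrized integral by routine manipulation. The only caution, at the step you yourself flag as the main obstacle in \eqref{E:unibalanced}, is that comparing $S^4$-conjugation with conjugation by $g^2$ a priori only shows that $a_\beta g_\beta^{-2}$ is a \emph{central grouplike} family rather than a scalar, so the last step is not just normalizing a constant --- this is precisely where the Hopf $G$-bialgebra version of \cite[Theorem~1]{KR93}, which the paper invokes, does the work.
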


\begin{proof}
 The first claim is the Hopf $G$-bialgebra version of \cite[Theorem~10.2.2.(b)]{R12}. Similarly, Equations~\eqref{E:unibalanced} and \eqref{E:quantum-char} are the Hopf $G$-bialgebra version of \cite[Theorem~10.5.4.(b) \& (e)]{R12}, at least up to the Hopf $G$-bialgebra version of \cite[Exercise~10.2.1]{R12}, see also \cite[Theorem~7.18.12]{EGNO15}. Remark that the Hopf $G$-coalgebra version of these properties is a direct consequence\footnote{Notice that our pivotal element $g$ coincides with Virelizier's spherical grouplike element $G$, and not with its square $g$, called the distinguished grouplike element. Also Radford denotes by $g$ the square of our pivotal element.} of \cite[Lemma~4.1 \& Theorem 4.2]{V00}. To deduce Equations~\eqref{E:unibalanced}--\eqref{E:antip-int} from there, the Hopf $G$-bialgebra version of \cite[Theorem~1]{KR93} is also needed.
\end{proof}

\section{Construction of the TQFT functor}\label{S:Construction_of_the_functor}

Let $H  = \{ H_\alpha^\beta \mid \alpha,\beta \in G \}$ be a factorizable ribbon Hopf $G$-bialgebra over a field $\Bbbk$. We will now construct a braided monoidal TQFT functor
\[
 J_H : \Tan^G \to \mods{H_{\mathrm{0}}^\oplus}.
\]

On the level of objects, for every finite sequence
\[
 (\myuline{\alpha},\myuline{\beta}) = (\alpha_1,\beta_1,\ldots,\alpha_g,\beta_g) \in G^{\times 2g},
\]
let us set 
\begin{equation}
 J_H(\myuline{\alpha},\myuline{\beta}) := H_{(\myuline{\alpha},\myuline{\beta})} = \bigotimes_{i=1}^g \transm{H}_{\alpha_i}^{\beta_i}, \label{E:image_objects}
\end{equation}
where, for all $\alpha,\beta \in G$, the $H_0^\oplus$-module $\transm{H}_\alpha^\beta$ is given by the vector space $H_\alpha^\beta$ equipped with the adjoint action 
\[
 x \triangleright y = x_{(1,\alpha)}yS_{-\alpha}^\gamma(x_{(2,-\alpha)})
\]
for all $x \in H_0^\gamma$ and $y \in \transm{H}_\alpha^\beta$.

On the level of morphisms, the definition requires some more preparation. First of all, suppose that, for a fixed regular diagram of a top tangle, the number of strands of running along a $\beta$-labeled $1$-han\-dle $B$ is $k$, with orientations and labels 
\[ 
 (\myuline{\sigma},\myuline{\xi}) = ((\sigma_1,\xi_1),\ldots,(\sigma_k,\xi_k)) \in (\{ +,- \} \times G)^{\times k}.
\]
Our convention here is that positive strands are those that run along $B$ from right to left, and we set $\alpha = \sigma_1 \xi_1 + \ldots + \sigma_k \xi_k$. Let us agree that \textit{inserting a bead with label $x \in \transm{H}_\alpha^\beta$ on the $1$-handle $B$} means considering the $(k-1)$-iterated oriented coproduct
\begin{align*}
 \Delta_{(\myuline{\sigma},\myuline{\xi})}^\beta(x) 
 &:= x_{(1,\xi_1)}^{\sigma_1} \otimes \ldots \otimes x_{(k,\xi_k)}^{\sigma_k} \in \bigotimes_{i=1}^k H_{\xi_i}^{\sigma_i \beta}, \\
 x_{(i,\xi_i)}^{\sigma_i} 
 &:= 
 \begin{cases}
  x_{(i,\xi_i)} \in H_{\xi_i}^\beta & \mbox{ if } \sigma_i = +, \\
  S_{-\xi_i}^\beta (x_{(i,-\xi_i)}) \in H_{\xi_i}^{-\beta} & \mbox{ if } \sigma_i = -,
 \end{cases} \qquad \Forall 1 \leqs i \leqs k,
\end{align*}
and then decorating the diagram around $B$ as shown:
\[
 \pic{bead_presentation_01}
\]
When $k = 0$, we add by convention a multiplicative factor of $\varepsilon^\beta(x)$ in front of the whole diagram. Here is an example of bead insertion:
\[
 \pic[-17.25]{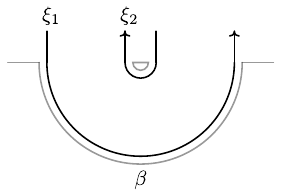} \mapsto \pic[-17.25]{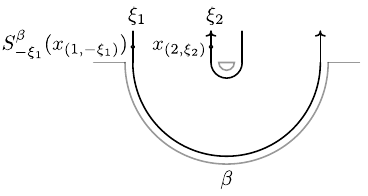}
\]
In this example, we are inserting a bead with label $x \in \transm{H}_\alpha^\beta$ for $\alpha = -\xi_1+\xi_2$. Using this operation, let us define, for a fixed regular diagram of a top tangle
\[
 T \in \Tan^G((\myuline{\alpha},\myuline{\beta}),(\myuline{\alpha'},\myuline{\beta'})),
\]
a linear map\footnote{We will need to show later that $J_H(T)$ is indeed well-defined, meaning that it does not depend on the chosen diagram, that it is invariant invariant under $G$-Kirby moves, and that it is an $H_0^\oplus$-intertwiner.}
\[
 J_H(T) : H_{(\myuline{\alpha},\myuline{\beta})} \to H_{(\myuline{\alpha'},\myuline{\beta'})}.
\]
In order to do this, let us consider a vector
\[
 x_{1} \otimes \ldots \otimes x_g \in H_{(\myuline{\alpha},\myuline{\beta})}.
\]
First, we insert a bead with label $x_i \in \transm{H}_{\alpha_i}^{\beta_i}$ on the $1$-handle $B_i$ of $M_g$ for every integer $1 \leqs i \leqs g$. Next, we insert beads labeled by components of the R-matrix around crossings, as shown:
\begin{align*}
 \pic{bead_presentation_04} &\mapsto \pic{bead_presentation_05} &
 \pic{bead_presentation_06} &\mapsto \pic{bead_presentation_07} \\
 \pic{bead_presentation_08} &\mapsto \pic{bead_presentation_09} &
 \pic{bead_presentation_10} &\mapsto \pic{bead_presentation_11} \\
 \pic{bead_presentation_12} &\mapsto \pic{bead_presentation_13} &
 \pic{bead_presentation_14} &\mapsto \pic{bead_presentation_15} \\
 \pic{bead_presentation_16} &\mapsto \pic{bead_presentation_17} &
 \pic{bead_presentation_18} &\mapsto \pic{bead_presentation_19}
\end{align*}
Then, we insert beads labeled by pivotal elements around right-oriented extrema, as shown:
\begin{align*}
 \pic{bead_presentation_20} &\mapsto \pic{bead_presentation_21} &
 \pic{bead_presentation_22} &\mapsto \pic{bead_presentation_23}
\end{align*}
Notice that this step has to be applied also to all the right-oriented minima that should appear inside $1$-handles. Next, we collect all beads sitting on the same component in one place, by sliding them along the strand without changing their order, and we multiply everything together according to the rule
\begin{align*}
 \pic{bead_presentation_24} &= \pic{bead_presentation_25}
\end{align*}
In the end, we are left with a top tangle $B(T)$ carrying at most a single bead on each of its components. The linear map $J_H(T) : H_{(\myuline{\alpha},\myuline{\beta})} \to H_{(\myuline{\alpha'},\myuline{\beta'})}$ is determined by
\begin{equation}
 J_H(T)(x_1 \otimes \ldots \otimes x_g) = 
 \left( \prod_{k=1}^h \lambda_{\gamma_k}(y_k g_{\gamma_k}^{-1}) \right)
 x'_1 \otimes \ldots \otimes x'_{g'} \in H_{(\myuline{\alpha'},\myuline{\beta'})} \label{E:image_morphisms}
\end{equation}
for every $x_1 \otimes \ldots \otimes x_g \in H_{(\myuline{\alpha},\myuline{\beta})}$, where
\begin{equation*}
 B(T) = \pic{bead_presentation_26}
\end{equation*}

\begin{lemma}
 In Equation~\eqref{E:image_morphisms}, we have indeed
 \[
  x'_j \in \transm{H}_{\alpha'_j}^{\beta_j'}, \qquad
  y_k \in \transm{H}_{\gamma_k}^0
 \]
 for all $1 \leqs j \leqs g'$ and $1 \leqs k \leqs h$.
\end{lemma}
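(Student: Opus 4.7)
My plan is to track the lower and upper $G$-indices of the single bead that survives on each component after all beads are collected and multiplied, and then to match these two indices against Equations~\eqref{E:middle} and \eqref{E:target}.

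For the lower index, I would observe that every inserted bead has lower index equal to the label of the strand on which it sits: each coproduct summand $x_{(i,\xi_i)}^{\sigma_i}$ lands in $H_{\xi_i}^{\pm\beta_i}$ by its very definition; each R-matrix piece at a crossing between strands of labels $\alpha$ and $\beta$ lies in $H_\alpha^{\beta/2}$ on the $\alpha$-strand and in $H_\beta^{\alpha/2}$ on the $\beta$-strand (and likewise for $R^{-1}$ at negative crossings, by Equation~\eqref{E:inv-R}); and each pivotal bead $g_\alpha \in H_\alpha^0$ inserted at a right-oriented extremum sits on a strand of label $\alpha$. Since $\mu_\alpha^{\beta,\gamma}$ preserves the lower index, the single bead obtained on a component after multiplication has lower index equal to that component's label, namely $\alpha'_j$ on $A'_j$ and $\gamma_k$ on $C_k$.

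For the upper index, I would enumerate contributions on a fixed component $X$, using that $\mu_\alpha^{\beta,\gamma}$ adds upper indices. Pivotal beads contribute $0$. Each passage of $X$ through a handle $B_i$ of label $\beta_i$ contributes $\sigma\beta_i$ with $\sigma \in \{\pm 1\}$ the orientation sign of the passage, aggregating to $\lk(a_i,X)\beta_i$. A crossing of $X$ with a distinct component $Y$ of label $\xi_Y$ puts a bead of upper index $\pm\xi_Y/2$ on the $X$-strand (signed by the crossing), and since $\lk(Y,X)$ equals half the signed crossing count, the total contribution from all $X$--$Y$ crossings is $\lk(Y,X)\xi_Y$. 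A self-crossing of $X$ instead places two R-matrix beads of upper index $\pm\xi_X/2$ on $X$, totalling $\pm\xi_X$ per crossing, so that the self-contribution aggregates to $\lk(X,X)\xi_X$ in the blackboard framing. The main bookkeeping subtlety, and the step I expect to require the most care, is precisely this doubling at self-crossings, which is what makes the final formula uniform across all components. Combining all contributions gives
\[
 \sum_i \lk(a_i,X)\beta_i + \sum_Y \lk(Y,X)\xi_Y
\]
for the upper index on $X$, where $Y$ ranges over all components of $T$, including $X$ itself. For $X = C_k$ this is the left-hand side of Equation~\eqref{E:middle} and hence vanishes, so $y_k \in H_{\gamma_k}^0 = \transm{H}_{\gamma_k}^0$. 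For $X = \hat{A}'_j$ this is the right-hand side of Equation~\eqref{E:target} and equals $\beta'_j$, so $x'_j \in H_{\alpha'_j}^{\beta'_j} = \transm{H}_{\alpha'_j}^{\beta'_j}$, proving the lemma.
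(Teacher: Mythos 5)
Your proposal is correct and follows essentially the same route as the paper's proof: track lower indices (which are preserved by products and always equal the strand label) and sum upper indices contributed by handle passages, crossings, and pivotal beads, then match the total against Equations~\eqref{E:middle} and \eqref{E:target}. You are in fact slightly more explicit than the paper about the factor-of-two bookkeeping at self-crossings versus crossings between distinct components, which the paper handles implicitly via its convention that $\lk(C_\ell,C_\ell)$ denotes the framing.
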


\begin{proof}
 The arc component $A'_\ell$ of $T$ runs along the $1$-handle $B_i$ of $M_g$ an algebraic total of $\lk(a_i,\hat{A}'_\ell)$ times, thus picking up beads whose lower degrees are all $\alpha'_\ell$, and whose total upper degree is $\lk(a_i,\hat{A}'_\ell) \beta_i$. Afterwards, it crosses the arc component $A'_j$ and the closed component $C_k$ of $T$ an algebraic total of $2 \lk(\hat{A}'_j,\hat{A}'_\ell)$ and of $2 \lk(C_k,\hat{A}'_\ell)$ times, respectively, thus picking up beads whose lower degree is always $\alpha'_\ell$, and whose total upper degree is $\lk(\hat{A}'_j,\hat{A}'_\ell) \alpha'_j$ and $\lk(C_k,\hat{A}'_\ell) \gamma_k$, respectively. Then, it picks up pivotal elements with lower degree $\alpha'_\ell$ and upper degree $0$. Since products preserve lower degrees and add up upper degrees, we deduce that $x'_\ell \in \transm{H}_{\alpha'_\ell}^{\beta_\ell'}$, as follows from Equation~\eqref{E:target}. The same argument applied to the circle component $C_\ell$ of $T$ shows that $y_\ell \in \transm{H}_{\gamma_k}^0$, as follows from Equation~\eqref{E:middle}.
\end{proof}

\begin{theorem}\label{T:main}
 Equations~\eqref{E:image_objects} and \eqref{E:image_morphisms} define a braided monoidal functor 
 \[
  J_H : \Tan^G \to \mods{H_{\mathrm{0}}^\oplus}.
 \]
\end{theorem}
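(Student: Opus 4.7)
The plan is to reduce the theorem to a finite list of checks on the generators of $\Tan^G$ identified in Remark~\ref{R:generators}, and to verify in succession: (a) well-definedness of $J_H(T)$ with respect to the choice of regular diagram representing its isotopy class, (b) invariance under the framed $G$-Kirby moves~\eqref{E:K1}--\eqref{E:K3}, (c) the $H_0^\oplus$-intertwining property of $J_H(T)$, and (d) compatibility with composition, identities, tensor product, and the braiding.

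For (a), I would follow the pattern of the Reshetikhin--Turaev and Hennings constructions, checking that the bead-placement algorithm is blind to local isotopies. Sliding a bead through a crossing is controlled by axiom~\eqref{E:R-intertw-coprod}; transferring a bead across a cup or cap uses~\eqref{E:antip-R}; combining repeated insertions along a single strand or $1$-handle uses associativity~\eqref{E:ass} together with the product-coproduct axiom~\eqref{E:prod-coprod}. Invariance under Reidemeister~II and~III then follows from~\eqref{E:inv-R} and the Yang--Baxter identity~\eqref{E:YB}, while invariance under framed Reidemeister~I is engineered by the pivotal beads inserted at right-oriented extrema together with the conjugation identity~\eqref{E:conj-piv} and the square-ribbon relation~\eqref{E:square-rib}. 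Handle slides of tangle strands along a $1$-handle, where beads split according to the coproduct, are compatible thanks to~\eqref{E:coass} and~\eqref{E:prod-coprod}, and the bookkeeping of $G$-degrees is forced to match by~\eqref{E:source}--\eqref{E:target}.

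For (b), the invariance under the $G$-Kirby moves is where the ribbon and factorizability hypotheses pay off. Move~\eqref{E:K3} reduces, after applying~\eqref{E:antip-R} and~\eqref{E:antip-coint}, to the integral identity~\eqref{E:antip-int}. Move~\eqref{E:K1} reduces to evaluating $\lambda_0(v_0^0 g_0^{-1})$ and $\lambda_0((v^{-1})_0^0 g_0^{-1})$ on the bead closed up by a $\pm 1$-framed unknot, with their values controlled by a normalization of the integral together with~\eqref{E:coun-rib} and~\eqref{E:square-rib}. The hardest case is move~\eqref{E:K2}, the handle slide: its algebraic content is the unibalanced integral identity~\eqref{E:unibalanced} from Proposition~\ref{P:integrals}, which I would combine with the bialgebra compatibilities~\eqref{E:R-coprod-left}--\eqref{E:R-coprod-right} to track the bead redistribution on the sliding strand, performed component by component in $G$-degree, and with the factorizability relation~\eqref{E:factorizable} to identify the resulting element.

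For (c), the $H_0^\oplus$-intertwining property reduces to a direct check on each generator that the bead on a top handle interacts with the adjoint action via~\eqref{E:antip-coprod}, \eqref{E:antip-prod}, and~\eqref{E:prod-coprod}. For (d), composition in $\Tan^G$ is realized diagrammatically by stacking and re-fusing $1$-handles into beads carrying coproducts of strand content, which matches composition of linear maps step by step, while the tensor product and the braiding are respected by construction: the image of the elementary braiding $c_{(\myuline{\alpha},\myuline{\beta}),(\myuline{\alpha'},\myuline{\beta'})}$ evaluates, via the algorithm, to the action of the universal R-matrix $R_{0,0}^{0,0}$ of $H_0^0 \subset H_0^\oplus$, which is precisely the braiding on $\mods{H_0^\oplus}$. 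I expect the main obstacle to be the verification of~\eqref{E:K2} with nontrivial $G$-labels, since the $G$-grading forces the sliding component's beads to redistribute over pieces of different upper and lower degrees, so the standard Hennings argument must be rewritten degree by degree using the full Hopf $G$-bialgebra structure.
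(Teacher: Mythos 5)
Your overall architecture coincides with the paper's: both proofs reduce everything to (i) isotopy/diagram independence via the standard Lawrence-universal-invariant argument with \eqref{E:YB}, \eqref{E:inv-R}, \eqref{E:square-rib} plus dinaturality of the $1$-handles, (ii) invariance under the three $G$-Kirby moves, (iii) functoriality via the handle trick, (iv) the intertwining property checked only on the generators of Remark~\ref{R:generators} using \eqref{E:antip-prod} and \eqref{E:antip-coprod} (the paper additionally needs \eqref{E:R-intertw-coprod}, \eqref{E:R-coprod-left}, \eqref{E:inv-Drinf}, \eqref{E:conj-Drinf} because the images of the coproduct and antipode generators carry R-matrix components), and (v) monoidality and the braiding read off from the algorithm. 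Your identification of \eqref{E:antip-int} as the algebraic content of \eqref{E:K3} also matches the paper exactly.

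The one step that would fail as proposed is your treatment of \eqref{E:K1}. You read it as blow-up/blow-down and propose to evaluate $\lambda_0(v_0^0 g_0^{-1})$ and $\lambda_0((v^{-1})_0^0 g_0^{-1})$ against a $\pm 1$-framed unknot. But in this category the signature defect is part of the data of a morphism of $3\Cob^G$ (equivalence of triples requires $\sig = \sig'$), and the surgery functor records $\sigma(T \smallsetminus \tilde{T})$; a blow-up changes that integer, so it is not among the relations on top tangles and $J_H$ is not (and need not be) invariant under it. The actual move \eqref{E:K1} is the cancellation of a $0$-framed Hopf pair, and its verification is precisely where factorizability enters: integrating $\lambda_\alpha \circ S$ against the double-braiding beads produced by the small component of the pair yields, via \eqref{E:factorizable}, the cointegral $\Lambda^{-\alpha}$ on the large component, which then absorbs every remaining bead on that component through \eqref{E:coint}, \eqref{E:coun-R}, and \eqref{E:coun-piv}. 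Your proposal places the factorizability axiom inside the handle-slide argument instead, where the paper only needs the defining left-integral axiom \eqref{E:int} (you cite the derived unibalanced identity \eqref{E:unibalanced}, which is the integrated-first-leg variant; whether it suffices depends on the orientation conventions of the slide, and the paper's choice \eqref{E:int} is the one matching its pictures). So the ingredients you list are all present in the paper, but redistributing them as you propose leaves the genuine \eqref{E:K1} unproved while proving invariance under a move that is not in the relation set.
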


\subsection{Proof of the invariance}\label{S:invariance}

Let us prove our main result.

\begin{proof}[Proof of Theorem~\ref{T:main}]
 We need to show that:
 \begin{itemize}
  \item $J_H(T)$ is independent of the regular diagram representing $T$;
  \item $J_H(T)$ is invariant under framed $G$-Kirby moves on $T$;
  \item $J_H(T' \circ T) = J_H(T') \circ J_H(T)$ and $J_H(\id_{(\myuline{\alpha},\myuline{\beta})}) = \id_{J_H(\myuline{\alpha},\myuline{\beta})}$;
  \item $J_H(T)$ is an $H_0^\oplus$-intertwiner;
  \item $J_H(T \bcs T) = J_H(T) \otimes J_H(T')$ and $J_H(\varnothing) = \Bbbk$;
  \item $J_H(c_{(\myuline{\alpha},\myuline{\beta}),(\myuline{\alpha'},\myuline{\beta'})}) = c_{J_H(\myuline{\alpha},\myuline{\beta}),J_H(\myuline{\alpha'},\myuline{\beta'})}$.
 \end{itemize}
 Invariance under framed Reidemeister moves follows from the Hopf $G$-bialgebra version of the standard argument that shows the isotopy invariance of Lawrence's universal invariant of links, using Equations~\eqref{E:square-rib}, \eqref{E:inv-R}, and \eqref{E:YB}, compare with the proof of \cite[Theorem~4.1]{O93}. Here, we also have to check that $1$-handles are dinatural with respect to crossings and extrema. For what concerns crossings, we need to compare the following tangles:
 \[
  \pic{dinaturality_of_handles_1} \qquad \pic{dinaturality_of_handles_2}
 \]
 The two configurations lead to the same result, as a consequence of Equations~\eqref{E:R-intertw-coprod} and \eqref{E:inv-R}, and the identity for the other crossings is proved similarly. For what concerns maxima, we need to compare the following tangles:
 \[
  \pic{dinaturality_of_handles_3} \qquad \pic{dinaturality_of_handles_4}
 \]
 Again, the two configurations lead to identical results, as a consequence of Equation~\eqref{E:antip}, and the identity for the other orientation is proved similarly. The same goes for maxima, which lead us to compare the following tangles:
 \[
  \pic{dinaturality_of_handles_5} \qquad \pic{dinaturality_of_handles_6}
 \]

 Next, we need to prove invariance under framed $G$-Kirby moves. The proof is essentially the same as the one for \cite[Theorem~3.1]{BD22}. Indeed, in order to prove invariance under \eqref{E:K1}, let us start by comparing the following tangles:
 \[
  \pic{invariance_K1a} \qquad \pic{invariance_K1b}
 \]
 The two configurations lead to the same result, as a consequence of Equation~\eqref{E:factorizable}, since
 \begin{align*}
  &\lambda_\alpha \left( S_{-\alpha}^0 \left( (R'_i)_{-\alpha}^0 \right) S_{-\alpha}^0 \left( (R''_j)_{-\alpha}^0 \right) \right) (R'_j)_0^{-\frac{\alpha}{2}} (R''_i)_0^{-\frac{\alpha}{2}} \\
  &\hspace*{\parindent} = \lambda_\alpha \left( S_{-\alpha}^0 \left( (R''_j)_{-\alpha}^0 (R'_i)_{-\alpha}^0 \right) \right) (R'_j)_0^{-\frac{\alpha}{2}} (R''_i)_0^{-\frac{\alpha}{2}} \\
  &\hspace*{\parindent} = \Lambda^{-\alpha}.
 \end{align*}
 This means that
 \[
  \pic{invariance_K1c} \rightsquigarrow \pic{invariance_K1d}
 \]
 Then the invariance follows from Equations~\eqref{E:coint}, \eqref{E:coun-R}, and \eqref{E:coun-piv}. For what concerns \eqref{E:K2}, we have
 \[
  \pic{invariance_K2a} \rightsquigarrow \pic{invariance_K2b}
 \]
 Then the invariance follows from Equation~\eqref{E:int}. Finally, for what concerns \eqref{E:K3}, we use the \textit{handle trick}, which amounts to replace the left-hand tangle with the right-hand one, here below:
 \[
  \pic{invariance_K3a} \qquad \pic{invariance_K3b}
 \]
 As usual, we are assuming that $\alpha = \sigma_1 \xi_1 + \ldots + \sigma_k \xi_k$. Notice that the two configurations lead to the same result, as a consequence of the invariance under \eqref{E:K1} and \eqref{E:K2}. Therefore, we can always assume that the circle component whose orientation we want to reverse is entirely contained in the unit cube $[0,1]^{\times 3} \subset \R^3$. Then, up to isotopy, we can assume that the knot is written as the closure of an upward-oriented braid (recall that we already explained that the result of the algorithm is invariant under isotopies). For a braid closure, reversing the orientation amounts to replacing each bead labeled by $x \in H_\alpha^\beta$ with a bead labeled by $S_\alpha^\beta(x) \in H_{-\alpha}^{-\beta}$. This means that
 \[
  \pic{invariance_K3c} \rightsquigarrow \pic{invariance_K3d}
 \]
 Then the invariance follows from Equation~\eqref{E:antip-int}. 

 Next, we need to prove that $J_H$ is a functor. The fact that it respects compositions can be proved using the handle trick once again, while the fact that it respects identities is clear from the algorithm.

 Next, we need to prove that $J_H(T)$ is an $H_0^\oplus$-intertwiner. Thanks to Remark~\ref{R:generators}, it is enough to check this for generators of $\Tan^G$. A computation shows that those appearing in Figure~\ref{F:generators} are sent to:
 \begin{itemize}
  \item a product 
   \begin{align*}
    \transm{\mu}_\alpha^{\beta,\gamma} : \transm{H}_\alpha^\beta \otimes \transm{H}_\alpha^\gamma &\to \transm{H}_\alpha^{\beta+\gamma} \\*
    x \otimes y &\mapsto xy;
   \end{align*}
  \item a unit
   \begin{align*}
    \transm{\eta}_\alpha : \Bbbk &\to \transm{H}_\alpha^0 \\*
    1 &\mapsto 1_\alpha;
   \end{align*}
  \item a coproduct 
   \begin{align*}
    \transm{\Delta}_{\alpha,\beta}^\gamma : \transm{H}_{\alpha+\beta}^\gamma &\to \transm{H}_\alpha^\gamma \otimes \transm{H}_\beta^\gamma \\*
    x &\mapsto \sum_i x_{(1,\alpha)}S_{-\alpha}^0((R''_i)_{-\alpha}^0) \otimes ((R'_i)_0^{-\frac{\alpha}{2}} \triangleright x_{(2,\beta)});
   \end{align*}
  \item a counit
   \begin{align*}
    \transm{\varepsilon}^\alpha : \transm{H}_0^\alpha &\to \Bbbk \\*
    x &\mapsto \varepsilon^\alpha(x);
   \end{align*}
  \item an antipode
   \begin{align*}
    \transm{S}_\alpha^\beta : \transm{H}_\alpha^\beta &\to \transm{H}_{-\alpha}^{-\beta} \\*
    x &\mapsto \sum_i (R''_i)_{-\alpha}^0 S_\alpha^\beta((R'_i)_0^{-\frac{\alpha}{2}} \triangleright x);
   \end{align*}
  \item an inverse antipode
   \begin{align*}
    (\myuline{S^{-1}})_\alpha^\beta : \transm{H}_\alpha^\beta &\to \transm{H}_{-\alpha}^{-\beta} \\*
    x &\mapsto \sum_i (S^{-1})_\alpha^\beta((R'_i)_0^{-\frac{\alpha}{2}} \triangleright x)(R''_i)_{-\alpha}^0;
   \end{align*}
  \item a ribbon element
   \begin{align*}
    \transm{v}_\alpha^{-\alpha} : \Bbbk &\to \transm{H}_\alpha^{-\alpha} \\*
    1 &\mapsto v_\alpha^{-\alpha};
   \end{align*}
  \item an inverse ribbon element
   \begin{align*}
    (\myuline{v^{-1}})_\alpha^\alpha : \Bbbk &\to \transm{H}_\alpha^\alpha \\*
    1 &\mapsto (v_\alpha^{-\alpha})^{-1};
   \end{align*}
  \item a counit
   \begin{align*}
    \transm{\lambda}_\alpha : \transm{H}_\alpha^0 &\to \Bbbk \\*
    x &\mapsto \lambda_\alpha(x).
   \end{align*}
 \end{itemize}
 For ribbon Hopf algebras, which correspond to the case $G = 0$, it was first proved in \cite{M91} that (the analogue of) these linear maps intertwine tensor powers of the adjoint action, see also \cite[Example~9.4.9]{M95}. Let us check this for the coproduct and the antipode, and leave the remaining generators to the reader. For all $x \in H_0^\delta$ and $y \in \transm{H}_{\alpha+\beta}^\gamma$, we have
 \begin{align*}
  &\transm{\Delta}_{\alpha,\beta}^\gamma(x \triangleright y) \\
  &= \transm{\Delta}_{\alpha,\beta}^\gamma
  (x_{(1,\alpha+\beta)} y S_{-\alpha-\beta}^\delta(x_{(2,-\alpha-\beta)})) \\
  &= \sum_i x_{(1,\alpha)} y_{(1,\alpha)} S_{-\alpha}^\delta(x_{(4,-\alpha)}) S_{-\alpha}^0((R''_i)_{-\alpha}^0) 
  \otimes ((R'_i)_0^{-\frac{\alpha}{2}} \triangleright (x_{(2,\beta)} y_{(2,\beta)} S_{-\beta}^\delta(x_{(3,-\beta)})) \\
  &= \sum_i x_{(1,\alpha)} y_{(1,\alpha)} S_{-\alpha}^\delta((R''_i)_{-\alpha}^0 x_{(3,-\alpha)}) 
  \otimes (((R'_i)_0^{-\frac{\alpha}{2}} x_{(2,0)}) \triangleright y_{(2,\beta)}) \\
  &= \sum_i x_{(1,\alpha)} y_{(1,\alpha)} S_{-\alpha}^\delta(x_{(2,-\alpha)} (R''_i)_{-\alpha}^0) 
  \otimes ((x_{(3,0)} (R'_i)_0^{-\frac{\alpha}{2}} )\triangleright y_{(2,\beta)}) \\
  &= \sum_i x_{(1,\alpha)} y_{(1,\alpha)} S_{-\alpha}^0((R''_i)_{-\alpha}^0) S_{-\alpha}^\delta(x_{(2,-\alpha)}) 
  \otimes (x_{(3,0)} \triangleright ((R'_i)_0^{-\frac{\alpha}{2}} \triangleright y_{(2,\beta)})) \\
  &= \sum_i (x_{(1,0)} \triangleright (y_{(1,\alpha)} S_{-\alpha}^0((R''_i)_{-\alpha}^0))) 
  \otimes (x_{(2,0)} \triangleright ((R'_i)_0^{-\frac{\alpha}{2}} \triangleright y_{(2,\beta)})) \\
  &= x \triangleright \transm{\Delta}_{\alpha,\beta}^\gamma(y),
 \end{align*}
 where the second equality follows from Equation~\eqref{E:antip-coprod}, the third and fifth ones from Equation~\eqref{E:antip-prod}, and the fourth one from Equation~\eqref{E:R-intertw-coprod}. Similarly, for all $x \in H_0^\gamma$ and $y \in \transm{H}_\alpha^\beta$, we have
 \begin{align*}
  &\transm{S}_\alpha^\beta (x \triangleright y) \\
  &= \transm{S}_\alpha^\beta
  (x_{(1,\alpha)} y S_{-\alpha}^\gamma(x_{(2,-\alpha)})) \\
  &= \sum_i (R''_i)_{-\alpha}^0 S_\alpha^\beta((R'_i)_0^{-\frac{\alpha}{2}} \triangleright (x_{(1,\alpha)} y S_{-\alpha}^\gamma(x_{(2,-\alpha)}))) \\
  &= \sum_{j,k} (R''_k)_{-\alpha}^{\frac{\alpha}{2}} (R''_j)_{-\alpha}^{-\frac{\alpha}{2}} S_\alpha^\beta((R'_k)_\alpha^{-\frac{\alpha}{2}} x_{(1,\alpha)} y S_{-\alpha}^\gamma(x_{(2,-\alpha)}) S_{-\alpha}^{-\frac{\alpha}{2}}((R'_j)_{-\alpha}^{-\frac{\alpha}{2}})) \\
  &= \sum_k (R''_k)_{-\alpha}^{\frac{\alpha}{2}} (u_{-\alpha}^\alpha)^{-1} S_\alpha^{-\gamma}(S_{-\alpha}^\gamma(x_{(2,-\alpha)})) S_\alpha^{-\frac{\alpha}{2}+\beta+\gamma}((R'_k)_\alpha^{-\frac{\alpha}{2}} x_{(1,\alpha)} y) \\
  &= \sum_k (R''_k)_{-\alpha}^{\frac{\alpha}{2}} x_{(2,-\alpha)} (u_{-\alpha}^\alpha)^{-1} S_\alpha^{-\frac{\alpha}{2}+\beta+\gamma}((R'_k)_\alpha^{-\frac{\alpha}{2}} x_{(1,\alpha)} y) \\
  &= \sum_k x_{(1,-\alpha)} (R''_k)_{-\alpha}^{\frac{\alpha}{2}} (u^{-1})_{-\alpha}^{-\alpha} S_\alpha^{-\frac{\alpha}{2}+\beta+\gamma}(x_{(2,\alpha)} (R'_k)_\alpha^{-\frac{\alpha}{2}} y) \\
  &= x \triangleright \transm{S}_\alpha^\beta (y),
 \end{align*}
where the third equality follows from Equation~\eqref{E:R-coprod-left}, the fourth and sixth ones from Equations~\eqref{E:antip-prod} and \eqref{E:inv-Drinf}, and the fifth one from Equation~\eqref{E:conj-Drinf}.
 
 Finally, we need to prove that $J_H$ is braided monoidal. Notice however that the fact that $J_H$ respects tensor products, tensor unit, and braiding is clear from the algorithm.
\end{proof}

\subsection{Relation with the Kerler--Lyubashenko TQFT}\label{S:relation_with_Lyubashenko}

If $H  = \{ H_\alpha^\beta \mid \alpha,\beta \in G \}$ is a factorizable ribbon Hopf $G$-bialgebra over a field $\Bbbk$, then, thanks to Remark~\ref{R:factorizable}, $H_0^0$ is a ribbon Hopf algebra. In particular, $H_0^0$ induces a Kerler--Lyubashenko TQFT
\[
 J_{H_0^0} : \Tan \to \mods{H_{\mathrm{0}}^{\mathrm{0}}}
\]
thanks to the construction of \cite{KL01}, see also \cite[Sections~4.2.3 \& 4.2.4]{DM22}. Notice that the inclusion $0 \hookrightarrow G$ induces an inclusion functor
\[
 \iota : \Tan \to \Tan^G,
\]
and similarly the inclusion $H_0^0 \hookrightarrow H_0^\oplus$ induces a restriction functor
\[
 \rho : \mods{H_{\mathrm{0}}^\oplus} \to \mods{H_{\mathrm{0}}^{\mathrm{0}}}.
\]

\begin{corollary}\label{C:relation_with_KL}
 The diagram of braided ribbon functors
 \begin{center}
  \begin{tikzpicture}[descr/.style={fill=white}] \everymath{\displaystyle}
   \node (P0) at (0,0) {$3\Cob$};
   \node (P1) at (2,0) {$\mods{H_{\mathrm{0}}^{\mathrm{0}}}$};
   \node (P2) at (0,-1) {$3\Cob^G$};
   \node (P3) at (2,-1) {$\mods{H_{\mathrm{0}}^\oplus}$};
   \draw
   (P0) edge[->] node[above] {\scriptsize $J_{H_0^0}$}(P1)
   (P0) edge[->] node[left] {\scriptsize $\iota$} (P2)
   (P3) edge[->] node[right] {\scriptsize $\rho$} (P1)
   (P2) edge[->] node[below] {\scriptsize $J_H$} (P3);
  \end{tikzpicture}
 \end{center}
 is commutative.
\end{corollary}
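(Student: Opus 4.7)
The plan is to reduce the commutativity to the diagrammatic level of top tangles via the surgery equivalence of Proposition~\ref{P:surgery_equivalence} (together with its $G = 0$ analogue from \cite{BD21, BBDP23}), and then to observe that the bead-insertion algorithm defining $J_H$ on a $G$-labeled top tangle whose labels are all zero coincides, step for step, with the algorithm defining $J_{H_0^0}$ on the underlying unlabeled top tangle. Under these equivalences, the inclusion $\iota : 3\Cob \to 3\Cob^G$ corresponds to the functor $\Tan \to \Tan^G$ that labels every arc, every circle, and every $1$-handle with $0 \in G$, and both the restriction $\rho$ and the Kerler--Lyubashenko construction $J_{H_0^0}$ transport equivariantly.

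On objects, for $(\myuline{0},\myuline{0}) \in G^{\times 2g}$, Equation~\eqref{E:image_objects} gives $J_H(\myuline{0},\myuline{0}) = \bigotimes_{i=1}^g \transm{H}_0^0$, equipped with the $g$-fold tensor power of the adjoint $H_0^\oplus$-action. Since this adjoint action is defined in terms of the product and the antipode of $H_0^\oplus$, both of which preserve the Hopf subalgebra $H_0^0 \subset H_0^\oplus$, the restriction functor $\rho$ returns $(H_0^0)^{\otimes g}$ equipped with the adjoint action of $H_0^0$ on itself, which is precisely the image of the corresponding object under $J_{H_0^0}$ as described in \cite[Sections~4.2.3 \& 4.2.4]{DM22}.

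On morphisms, I would inspect the bead-insertion algorithm of Section~\ref{S:Construction_of_the_functor} applied to a top tangle $T$ with all $G$-labels equal to zero. Every bead arising from the algorithm stays inside $H_0^0$: iterated coproducts on handles come from $\Delta$-components with zero lower indices, R-matrix beads are components of $R_{0,0}^{0,0} \in H_0^0 \otimes H_0^0$, pivotal beads at right-oriented extrema are $g_0 \in H_0^0$, twist beads contribute $v_0^0$ or its inverse in $H_0^0$, and closed components yield the scalar $\lambda_0(-\cdot g_0^{-1})$. By Remarks~\ref{R:Hopf_G-coalgebra} and \ref{R:factorizable}, the collection of these structural maps endows $H_0^0$ with the structure of a factorizable ribbon Hopf algebra, so the resulting algorithm is exactly the one defining $J_{H_0^0}$ in \cite{KL01} and \cite[Section~4.2.4]{DM22}.

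The only real obstacle is the bookkeeping needed to confirm that the two algorithms match convention-by-convention (placement of pivotal elements at right-oriented maxima, orientations of R-matrix insertions, sign of integral normalizations, etc.). Since the construction of $J_H$ in Section~\ref{S:Construction_of_the_functor} was explicitly modeled on the Kerler--Lyubashenko construction, this comparison should be essentially tautological once the zero-label reduction has been made, and the commutativity of the diagram follows.
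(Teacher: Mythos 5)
Your overall strategy matches the paper's: pass to the diagrammatic side via the surgery equivalence, note that all beads produced by the algorithm on a $0$-labeled tangle live in $H_0^0$, and identify the resulting construction with the Kerler--Lyubashenko one. However, your central claim --- that the bead-insertion algorithm for $J_H$ at zero labels ``coincides, step for step'' with the algorithm defining $J_{H_0^0}$ --- is exactly what the paper says is \emph{not} true. The algorithm of \cite[Section~4.2.4]{DM22} is formulated for \emph{unoriented} top tangles in handlebodies, whereas $J_H$ restricted to $0$-labels operates on \emph{oriented} tangles, with antipodes applied along negatively oriented strands and pivotal elements inserted at right-oriented extrema. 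These are genuinely different recipes, so the comparison is not ``essentially tautological,'' and the bookkeeping you defer is precisely where the content of the proof lies.

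The missing idea is how to bridge the two non-coinciding recipes: the paper checks that they agree on the finite list of generators of $\Tan \cong \Tan^0$ (the morphisms of Figure~\ref{F:generators} with all labels set to $0$, versus their unoriented unlabeled versions), and then invokes the fact that both procedures define braided monoidal functors, so that agreement on generators forces agreement on all morphisms. Your proposal never reduces to generators and never uses functoriality to propagate the comparison, so as written it asserts rather than proves the key identification. With that generator-plus-monoidality step added, your argument becomes the paper's.
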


\begin{proof}
 The proof is almost clear, except for the fact that the algorithm presented in \cite[Section~4.2.4]{DM22} is given in terms of unoriented top tangles in handlebodies, instead of oriented ($0$-labeled) top tangles in ($0$-labeled) handlebodies. The two recipes do not coincide, but it turns out that they yield the same result. Indeed, this is true when the two algorithms are applied to the oriented generators obtained from Figure~\ref{F:generators} by setting all labels equal to $0$, on one hand, and to their unoriented unlabeled version, on the other hand. Then, since both procedures produce braided monoidal functors, their result must coincide for arbitrary top tangles too.
\end{proof}

\section{Quantum \texorpdfstring{$\fsl_2$}{sl(2)}}\label{S:quantum_sl2_algebra}

Let us consider the primitive $r$th root of unity $q = e^{\frac{2 \pi \fraki}{r}}$, where $r \geqs 3$ is an integer, and let us set $r' := r/\gcd(2,r)$. For every $\alpha \in \C$, and for all positive integers $k \geqs \ell \geqs 0$, we recall the notation 
\begin{align*}
 q^\alpha &:= e^{\frac{2 \alpha \pi \fraki}{r}}, &
 \{ \alpha \} &:= q^\alpha - q^{-\alpha}, &
 [k] &:= \frac{\{ k \}}{\{ 1 \}}, &
 [k]! &:= \prod_{j=1}^k [j], &
 \sqbinom{k}{\ell} &:= \frac{[k]!}{[\ell]![k-\ell]!}.
\end{align*}
The \textit{unrestricted quantum group} $\frakU = \frakU_q \fsl_2$ is the $\C$-algebra with generators
\[
 \{ E,F,K^\alpha \mid \alpha \in \C \}
\]
and relations
\begin{gather*}
 E^{r'} = F^{r'} = 0, \qquad K^\alpha K^\beta = K^{\alpha+\beta}, K^0 = 1, \\*
 K^\alpha E K^{-\alpha} = q^{2\alpha} E, \qquad K^\alpha F K^{-\alpha} = q^{-2\alpha} F, \qquad
 [E,F] = \frac{K - K^{-1}}{q - q^{-1}}.
\end{gather*}
Now $\frakU$ admits a Hopf algebra structure obtained by setting
\begin{align*}
 \Delta(E) &= E \otimes K + 1 \otimes E, & \varepsilon(E) &= 0, & S(E) &= -E K^{-1}, \\*
 \Delta(F) &= K^{-1} \otimes F + F \otimes 1, & \varepsilon(F) &= 0, & S(F) &= - K F, \\*
 \Delta(K^\alpha) &= K^\alpha \otimes K^\alpha, & \varepsilon(K^\alpha) &= 1, & S(K^\alpha) &= K^{-\alpha}.
\end{align*}
Let us set $G = \C/2\Z$ and $\frakU_{\bar{\alpha}} := \frakU/(K^{\frac{r}{2}}-q^{\frac{\alpha r}{2}})$ for every $\bar{\alpha} \in G$, where $\alpha \in \C$ is any representative of $\bar{\alpha}$. Next, let us consider the subalgebra $\fraku_{\bar{\alpha}}$ of $\frakU_{\bar{\alpha}}$ generated by $\{ E,F,K \}$, and set $\fraku_{\bar{\alpha}}^{\bar{\beta}} := \fraku_{\bar{\alpha}} K^\beta$ for every $\bar{\beta} \in G$, where $\beta \in \C$ is any representative of $\bar{\beta}$. Then $\fraku := \{ \fraku_{\bar{\alpha}}^{\bar{\beta}} \mid \bar{\alpha},\bar{\beta} \in G \}$ inherits the structure of a Hopf $G$-bialgebra. A Poincaré--Birkhoff--Witt basis for $\fraku_{\bar{\alpha}}^{\bar{\beta}}$ is given by
\[
 \{ E^\ell F^m K^{n+\beta} \mid 0 \leqs \ell,m,n \leqs r'-1 \}.
\]

\begin{remark}
 When $r \equiv 1 \pmod 2$, a Poincaré--Birkhoff--Witt basis for $\fraku_{\bar{\alpha}}^0 = \fraku_{\bar{\alpha}}$ is given by
 \[
  \{ E^\ell F^m K^{\frac{n}{2}} \mid 0 \leqs \ell,m,n \leqs r-1 \},
 \]
 because $K^n = q^{\frac{\alpha r}{2}} K^{n-\frac{r}{2}}$ for every $\frac{r-1}{2} < n \leqs r-1$. In this case,
 \begin{align*}
  \fraku_{\bar{\alpha}+1} &\to \fraku_{\bar{\alpha}} \\*
  E &\mapsto E \\*
  F &\mapsto F \\*
  K^{\frac{1}{2}} &\mapsto -K^{\frac{1}{2}}
 \end{align*}
 is an isomorphism of $\C$-algebras, and $\fraku_{\bar{\alpha}}^{\bar{\beta}+1} = \fraku_{\bar{\alpha}}^{\bar{\beta}}$.
\end{remark}

\subsection{Integral bases}\label{S:half-divided_basis}

For all $\alpha, \beta \in \C$, let us set
\begin{align*}
 T_\alpha^\beta := \frac{1}{r'} \sum_{b=0}^{r'-1} q^{-\alpha b} K^{b+\beta}.
\end{align*}

\begin{lemma}\label{L:projectors}
 In $\fraku_{\bar{\alpha}}^{\bar{\beta}}$ we have
 \begin{gather*}
  T_\alpha^\beta E = q^{2\beta} E T_{\alpha-2}^\beta, \qquad
  T_\alpha^\beta F = q^{-2\beta} F T_{\alpha+2}^\beta, \\*
  T_\alpha^\beta K = K T_\alpha^\beta = T_\alpha^{\beta+1} = q^\alpha T_\alpha^\beta, \qquad
  T_\alpha^\beta T_{\alpha'}^{\beta'} = \delta_{\alpha,\alpha'} T_\alpha^{\beta+\beta'} 
 \end{gather*}
\end{lemma}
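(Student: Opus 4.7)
All four families of identities reduce to direct manipulations of the defining sum $T_\alpha^\beta = \frac{1}{r'}\sum_{b=0}^{r'-1} q^{-\alpha b} K^{b+\beta}$ using only the relations available in $\fraku_{\bar\alpha}^{\bar\beta}$, namely the commutation rules $KE = q^2 EK$ and $KF = q^{-2}FK$ together with the single nontrivial polynomial relation $K^{r'} = q^{\alpha r'}$ (a direct consequence of the defining relation $K^{r/2} = q^{\alpha r/2}$, since $r' = r/\gcd(2,r)$).

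First I would establish the commutations with $E$ and $F$ by pushing the generator past each summand. Iterating $K^m E = q^{2m} EK^m$ gives
\[
T_\alpha^\beta E = \frac{1}{r'}\sum_{b=0}^{r'-1} q^{-\alpha b} q^{2(b+\beta)} E K^{b+\beta} = q^{2\beta} E \cdot \frac{1}{r'}\sum_{b=0}^{r'-1} q^{-(\alpha-2)b} K^{b+\beta} = q^{2\beta} E T_{\alpha-2}^\beta,
\]
and the identity for $F$ is the strictly analogous calculation with $2$ replaced by $-2$.

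Next I would dispatch the $K$-identities. Commutativity $T_\alpha^\beta K = K T_\alpha^\beta$ is automatic because $T_\alpha^\beta$ is a polynomial in $K$, and the equality $K T_\alpha^\beta = T_\alpha^{\beta+1}$ is immediate from shifting the exponent of $K$ inside the sum. The nontrivial content is $T_\alpha^{\beta+1} = q^\alpha T_\alpha^\beta$: the plan is to reindex $c = b+1$ to obtain
\[
T_\alpha^{\beta+1} = q^\alpha \cdot \frac{1}{r'}\sum_{c=1}^{r'} q^{-\alpha c} K^{c+\beta},
\]
and then invoke $K^{r'} = q^{\alpha r'}$ to compare the boundary terms: the dropped $c = 0$ contribution is $\tfrac{1}{r'} K^\beta$, while the added $c = r'$ contribution is $\tfrac{1}{r'} q^{-\alpha r'} K^{r'+\beta} = \tfrac{1}{r'} K^\beta$, so they cancel, producing $T_\alpha^{\beta+1} = q^\alpha T_\alpha^\beta$. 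This is the only place where the precise quotient defining $\fraku_{\bar\alpha}^{\bar\beta}$ is used, and I expect it to be the main technical subtlety of the whole proof.

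The remaining identity $T_\alpha^\beta T_{\alpha'}^{\beta'} = \delta_{\alpha,\alpha'} T_\alpha^{\beta+\beta'}$ is then a short computation. Iterating the previous paragraph yields $K^{b'} T_\alpha^\beta = T_\alpha^{\beta+b'} = q^{\alpha b'} T_\alpha^\beta$ for every integer $b'$, after which a direct rearrangement gives
\[
T_{\alpha'}^{\beta'} T_\alpha^\beta = \frac{1}{r'}\sum_{b'=0}^{r'-1} q^{-\alpha' b'} K^{b'+\beta'} T_\alpha^\beta = \left(\frac{1}{r'}\sum_{b'=0}^{r'-1} q^{(\alpha-\alpha')b'}\right) T_\alpha^{\beta+\beta'}.
\]
The bracketed scalar is the standard orthogonality sum of characters of the cyclic group $\Z/r'\Z$, evaluated at the root of unity $q^{\alpha-\alpha'}$, so it equals $\delta_{\alpha,\alpha'}$ on the natural set of representatives modulo $r'$ (which is how $T_\alpha^\beta$ will be used in Section~\ref{S:half-divided_basis}).
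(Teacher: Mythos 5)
Your proposal is correct and follows essentially the same route as the paper's proof: direct manipulation of the defining sum, pushing $E$ and $F$ past $K^{b+\beta}$, reindexing and using $K^{r'} = q^{\alpha r'}$ to absorb the boundary term in the identity $T_\alpha^{\beta+1} = q^\alpha T_\alpha^\beta$, and orthogonality of $r'$-th roots of unity for the product formula. Your remark that $\delta_{\alpha,\alpha'}$ must be read on the chosen set of representatives modulo $r'$ is a sound (and slightly more explicit) reading of the same computation the paper performs.
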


\begin{proof}
 \begin{align*}
  T_\alpha^\beta E 
  &= \frac{1}{r'} \sum_{b=0}^{r'-1} q^{-\alpha b} K^{b+\beta} E
  = \frac{q^{2\beta}}{r'} \sum_{b=0}^{r'-1} q^{-(\alpha-2)b} E K^{b+\beta}
  = q^{2\beta} E T_{\alpha-2}^\beta, \\
  T_\alpha^\beta F 
  &= \frac{1}{r'} \sum_{b=0}^{r'-1} q^{-\alpha b} K^{b+\beta} F 
  = \frac{q^{-2\beta}}{r'} \sum_{b=0}^{r'-1} q^{-(\alpha+2)b} F K^{b+\beta}
  = q^{-2\beta} F T_{\alpha+2}^\beta, \\
  T_\alpha^\beta K 
  &= \frac{1}{r'} \sum_{b=0}^{r'-1} q^{-\alpha b} K^{b+1+\beta}  
  = K T_\alpha^\beta
  = T_\alpha^{\beta+1} \\
  &= \frac{q^\alpha}{r'} \left( q^{-\alpha r'} q^{\alpha r'} K^\beta + \sum_{b=1}^{r'-1} q^{-\alpha b} K^{b+\beta} \right) \\
  &=q^\alpha T_\alpha^\beta,\\
  T_\alpha^\beta T_{\alpha'}^{\beta'} 
  &= \frac{1}{r'} \sum_{b=0}^{r'-1} q^{-\alpha b} K^{b+\beta} T_{\alpha'}^{\beta'}
  = \frac{1}{r'} \sum_{b=0}^{r'-1} q^{-\alpha b} K^b T_{\alpha'}^{\beta+\beta'} 
  = \frac{1}{r'} \sum_{b=0}^{r'-1} q^{-\alpha b} q^{\alpha' b} T_{\alpha'}^{\beta+\beta'} \\
  &= \delta_{\alpha,\alpha'} T_\alpha^{\beta+\beta'} \qedhere
 \end{align*}
\end{proof}

Notice that, for every integer $0 \leqs b \leqs r-1$ and all complex numbers $\alpha, \beta \in \C$, we have
\begin{align}
 \sum_{a=0}^{r'-1} q^{(2a+\alpha)b} T_{2a+\alpha}^\beta
 &= \sum_{a,c=0}^{r'-1} q^{(2a+\alpha)b} q^{-(2a+\alpha)c} K^{c+\beta} \nonumber \\
 &= \sum_{c=0}^{r'-1} \left( \sum_{a=0}^{r'-1} q^{2a(b-c)} \right) q^{\alpha(b-c)} K^{c+\beta} \nonumber \\
 &= \sum_{c=0}^{r'-1} \delta_{b,c} q^{\alpha(b-c)} K^{c+\beta} \nonumber \\
 &= K^{b+\beta}. \label{E:from_T_to_K}
\end{align}
If, for every integer $0 \leqs a \leqs r-1$, we set
\[
 F^{(a)} := \frac{\{ 1 \}^a}{[a]!} F^a,
\]
then
\[
 \{ E^\ell F^{(m)} T_{2n+\alpha}^\beta \mid 0 \leqs \ell,m,n \leqs r'-1 \}
\]
is a basis of $\fraku_{\bar{\alpha}}^{\bar{\beta}}$ for any pair of representatives $\alpha,\beta \in \C$ of $\bar{\alpha},\bar{\beta} \in G$.

\subsection{Computations in integral bases}\label{S:Identities_in_hdb}

First of all, we compute coproducts and antipodes in integral bases.

\begin{lemma}\label{L:coproducts_antipodes}
 For all integers $0 \leqs a,b,c \leqs r-1$ we have
 \begin{align}
  \Delta \left( F^{(a)} E^b T_\alpha^\beta \right) 
  &= \sum_{c=0}^{r'-1} \sum_{i=0}^a \sum_{j=0}^b \sqbinom{b}{j} q^{(a-\alpha)i+bj-2c(i+j)-(i+j)^2} \nonumber \\*
  &\hspace*{\parindent} F^{(a-i)} E^j T_{2c+\alpha}^\beta \otimes F^{(i)} E^{b-j} T_{-2c}^\beta, \label{E:coproducts} \\*    
  S \left( F^{(a)} E^b T_\alpha^\beta \right) 
  &= (-1)^{a+b} q^{(a-b-\alpha-1)(a-b)} T_{-\alpha}^{-\beta} E^b F^{(a)}. \label{E:antipodes}
 \end{align}
\end{lemma}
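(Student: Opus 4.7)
My plan is to verify both identities by direct computation, exploiting multiplicativity of $\Delta$, anti-multiplicativity of $S$, and the orthogonality/eigenvalue identities for the $T_\alpha^\beta$ established in Lemma~\ref{L:projectors}.

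For \eqref{E:coproducts}, I would compute the three factors of $\Delta(F^{(a)} E^b T_\alpha^\beta) = \Delta(F^{(a)}) \Delta(E^b) \Delta(T_\alpha^\beta)$ separately. The quantum binomial theorem applied to $\Delta(E) = E \otimes K + 1 \otimes E$ and $\Delta(F) = K^{-1} \otimes F + F \otimes 1$, combined with the $q$-commutations induced by $KE = q^2 EK$ and $KF = q^{-2} FK$, produces
\[
 \Delta(E^b) = \sum_{j=0}^b q^{j(b-j)} \sqbinom{b}{j} E^j \otimes E^{b-j} K^j,
 \qquad
 \Delta(F^{(a)}) = \sum_{i=0}^a q^{i(a-i)} F^{(a-i)} K^{-i} \otimes F^{(i)}.
\]
For the third factor, since $\Delta(K^\gamma) = K^\gamma \otimes K^\gamma$, we have $\Delta(T_\alpha^\beta) = \frac{1}{r'} \sum_b q^{-\alpha b} K^{b+\beta} \otimes K^{b+\beta}$, which can be re-expanded using \eqref{E:from_T_to_K} in each tensor slot; a discrete Fourier orthogonality in the exponent variable $b$ then collapses the result to
\[
 \Delta(T_\alpha^\beta) = \sum_{c=0}^{r'-1} T_{2c+\alpha}^\beta \otimes T_{-2c}^\beta.
\]
Multiplying the three factors, pushing every $K$ past the $E$'s and $F$'s via $KE = q^2 EK$ and $KF = q^{-2} FK$, and absorbing the remaining powers of $K$ into the $T$'s using $K T_\gamma^\beta = q^\gamma T_\gamma^\beta$ from Lemma~\ref{L:projectors}, the accumulated $q$-exponent is
\[
 i(a-i) + j(b-j) - 2ij - i(2c+\alpha) - 2cj = (a-\alpha)i + bj - (i+j)^2 - 2c(i+j),
\]
which matches the right-hand side of \eqref{E:coproducts}.

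For \eqref{E:antipodes}, anti-multiplicativity of $S$ gives $S(F^{(a)} E^b T_\alpha^\beta) = S(T_\alpha^\beta)\, S(E)^b\, S(F^{(a)})$. Using $S(E) = -EK^{-1}$ and $S(F) = -KF$ together with the same $q$-commutations, one gets $S(E)^b = (-1)^b q^{-b(b-1)} E^b K^{-b}$ and $S(F^{(a)}) = (-1)^a q^{a(a-1)} K^a F^{(a)}$. The identity $S(T_\alpha^\beta) = T_{-\alpha}^{-\beta}$ is obtained by substituting $b \mapsto r' - b$ in the defining sum of $T_\alpha^\beta$ and invoking the quotient relation $K^{r'} = q^{\alpha r'}$ valid in $\fraku_{\bar{\alpha}}$. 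Collecting everything and then sliding the leftover $K^{a-b}$ leftward past $E^b$ (factor $q^{-2b(a-b)}$) and absorbing it into $T_{-\alpha}^{-\beta}$ (factor $q^{-\alpha(a-b)}$), the total exponent simplifies as
\[
 a(a-1) - b(b-1) - 2b(a-b) - \alpha(a-b) = (a-b)(a-b-\alpha-1),
\]
which is exactly \eqref{E:antipodes}.

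The only conceptually new ingredient is the Fourier-dual identity $\Delta(T_\alpha^\beta) = \sum_c T_{2c+\alpha}^\beta \otimes T_{-2c}^\beta$; everything else is bookkeeping of $q$-powers through the reorderings. The main practical obstacle will be ensuring that the various commutation exponents combine correctly, since even small sign or normalization errors would propagate through the final simplification. The quotient relation $K^{r/2} = q^{\alpha r/2}$ enters only through the identification $S(T_\alpha^\beta) = T_{-\alpha}^{-\beta}$.
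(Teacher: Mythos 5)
Your proposal is correct and follows essentially the same route as the paper: both arguments combine the (anti-)multiplicativity of $\Delta$ and $S$ with the standard coproduct/antipode formulas for monomials in $E$, $F$, $K$ (which the paper cites from Klimyk--Schmüdgen and you re-derive via the $q$-binomial theorem) and with the key identities $\Delta(T_\alpha^\beta)=\sum_c T_{2c+\alpha}^\beta\otimes T_{-2c}^\beta$ and $S(T_\alpha^\beta)=T_{-\alpha}^{-\beta}$, after which the exponent bookkeeping (which I checked) matches \eqref{E:coproducts} and \eqref{E:antipodes}. The only cosmetic imprecision is that the reindexing proving $S(T_\alpha^\beta)=T_{-\alpha}^{-\beta}$ uses the relation $K^{r'}=q^{-\alpha r'}$ of the target algebra $\fraku_{-\bar\alpha}$ rather than the relation in $\fraku_{\bar\alpha}$, but this does not affect the argument.
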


\begin{proof}
 \cite[Chapter~3, Proposition~5]{KS97} gives
 \begin{align*}
  \Delta \left( F^\ell K^m E^n \right) 
  &= \sum_{i=0}^\ell \sum_{j=0}^n \sqbinom{\ell}{i} \sqbinom{n}{j} q^{i(\ell-i)-j(n-j)} F^{\ell-i} K^{m-i} E^j \otimes F^i K^{m+j} E^{n-j}, \\*
  S \left( F^\ell K^m E^n \right) 
  &= (-1)^{\ell+n} q^{\ell(\ell-1)-n(n-1)} E^n K^{\ell-m-n} F^\ell.
 \end{align*}
 Furthermore
 \begin{align*}
  \Delta \left( T_\alpha^\beta \right) 
  &= \frac{1}{r'} \sum_{b=0}^{r'-1} q^{-\alpha b} \Delta \left( K^{b+\beta} \right) 
  = \frac{1}{r'} \sum_{b=0}^{r'-1} q^{-\alpha b} K^{b+\beta} \otimes K^{b+\beta} \\*
  &= \frac{1}{r'} \sum_{a,b=0}^{r'-1} q^{2ab} T_{2a+\alpha}^\beta \otimes K^{b+\beta} 
  = \sum_{a=0}^{r'-1} T_{2a+\alpha}^\beta \otimes T_{-2a}^\beta, \\*
  S \left( T_\alpha^\beta \right) 
  &= \frac{1}{r'} \sum_{b=0}^{r'-1} q^{-\alpha b} S \left( K^{b+\beta} \right)
  = \frac{1}{r'} \sum_{b=0}^{r-1} q^{-\alpha b} K^{-b-\beta}
  = \frac{1}{r'} \sum_{b=0}^{r-1} q^{\alpha b} K^{b-\beta}
  = T_{-\alpha}^{-\beta}.
 \end{align*}
 Using this, it easy to check the claim.
\end{proof}

Finally, we provide formulas for commutators.

\begin{lemma}\label{L:commutators}
 For all integers $0 \leqs a,b,m \leqs r-1$, we have
 \begin{align}
  F^{(a)} E^b T_\alpha^\beta &= \sum_{k=0}^{\min \{ a,b \}} \sqbinom{b}{k} \{ a-b-\alpha;k \} E^{b-k} F^{(a-k)} T_\alpha^\beta, \label{E:commutator_Habiro_T_right} \\*
  T_\alpha^\beta F^{(a)} E^b &= \sum_{k=0}^{\min \{ a,b \}} \sqbinom{b}{k} \{ -a+b-\alpha;k \} T_\alpha^\beta E^{b-k} F^{(a-k)}, \label{E:commutator_Habiro_T_left}
 \end{align}
 where $\{ n;k \} := \prod_{j=0}^{k-1} \{ n-j \}$ for all integers $0 \leqs k \leqs n$.
\end{lemma}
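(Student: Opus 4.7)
The plan is to prove both identities by first establishing a purely $K$-dependent commutator identity in $\fraku$ and then specializing it via the projector property $KT_\alpha^\beta = q^\alpha T_\alpha^\beta$ from Lemma~\ref{L:projectors}. The key intermediate identity in $\fraku$ is
\[
F^{(a)} E^b = \sum_{k=0}^{\min\{a,b\}} \sqbinom{b}{k} E^{b-k} F^{(a-k)} \prod_{j=0}^{k-1} \bigl( q^{a-b-j} K^{-1} - q^{-(a-b-j)} K \bigr).
\]
The base case $b=1$ reads $F^{(a)} E = E F^{(a)} + F^{(a-1)}(q^{a-1} K^{-1} - q^{-(a-1)} K)$, which is the standard divided-power form of $[E,F] = (K-K^{-1})/(q-q^{-1})$ after absorbing the factor $\{1\}^a/[a]!$. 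Induction on $b$ proceeds by right-multiplying by $E$: each term $E^{b-k} F^{(a-k)} X_k \cdot E$ is rewritten using the $b=1$ case applied to $F^{(a-k)} E$, together with the commutation $X_k E = E \cdot X_k'$, where $X_k'$ is obtained from $X_k$ by replacing each $K^{\pm 1}$ by $q^{\mp 2} K^{\pm 1}$. The new contributions coming from indices $k$ and $k+1$ then recombine via the $q$-Pascal identity into the $\sqbinom{b+1}{k+1}$ coefficient and the right product for $(a, b+1)$.

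Specializing by right-multiplication by $T_\alpha^\beta$, Lemma~\ref{L:projectors} converts each factor via
\[
\bigl(q^{a-b-j} K^{-1} - q^{-(a-b-j)} K\bigr) T_\alpha^\beta = \bigl(q^{a-b-j-\alpha} - q^{-(a-b-j-\alpha)}\bigr) T_\alpha^\beta = \{a-b-\alpha-j\} T_\alpha^\beta,
\]
and taking the product over $0 \leqs j \leqs k-1$ yields the scalar $\{a-b-\alpha; k\}$, which may freely be pulled to the front of each summand, giving Equation~\eqref{E:commutator_Habiro_T_right}.

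For Equation~\eqref{E:commutator_Habiro_T_left}, I would use Lemma~\ref{L:projectors} iteratively to slide $T_\alpha^\beta$ past $F^{(a)} E^b$ to the right, obtaining
\[
T_\alpha^\beta F^{(a)} E^b = q^{2(b-a)\beta} F^{(a)} E^b T_{\alpha+2(a-b)}^\beta.
\]
Applying Equation~\eqref{E:commutator_Habiro_T_right} with $\alpha$ replaced by $\alpha+2(a-b)$ produces the coefficient $\{b-a-\alpha; k\}$. Sliding each resulting $T_{\alpha+2(a-b)}^\beta$ back to the left through $E^{b-k} F^{(a-k)}$, again by Lemma~\ref{L:projectors}, the accumulated $q^{\pm 2\beta}$ factors (specifically $q^{2(a-k)\beta}$ from $F^{(a-k)}$ and $q^{-2(b-k)\beta}$ from $E^{b-k}$) combine to $q^{2(a-b)\beta}$, exactly cancelling the prefactor $q^{2(b-a)\beta}$ and restoring the label to $\alpha$.

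The main obstacle is the combinatorial bookkeeping in the inductive step of the intermediate identity, namely tracking the $q^{\pm 2}$ shifts produced by commuting $K^{\pm 1}$ past $E$ and verifying that the Pascal-style recombination correctly produces the shifted exponents $a - (b{+}1) - j$ in the product. Once that identity is secured, the rest is a routine application of Lemma~\ref{L:projectors}.
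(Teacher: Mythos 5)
Your proposal is correct and follows essentially the same route as the paper: both reduce to the divided-power commutator identity $F^{(a)}E^b=\sum_k\sqbinom{b}{k}E^{b-k}F^{(a-k)}\prod_{j=0}^{k-1}(q^{a-b-j}K^{-1}-q^{-a+b+j}K)$ and then specialize via $K^{\pm1}T_\alpha^\beta=q^{\pm\alpha}T_\alpha^\beta$, deriving the second identity from the first by sliding $T_\alpha^\beta$ through with Lemma~\ref{L:projectors}. The only differences are cosmetic: the paper obtains the intermediate identity by applying the automorphism $\omega$ to a formula cited from Klimyk--Schmüdgen instead of running your induction on $b$, and your explicit tracking of the cancelling $q^{2(b-a)\beta}$ prefactors is slightly more careful than the paper's (which silently absorbs them).
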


\begin{proof}
 If we apply the algebra isomorphism $\omega : \fraku_0^0 \to \fraku_0^0$ defined by
 \begin{align*}
  \omega(E) &= F, &
  \omega(F) &= E, &
  \omega(K) &= K^{-1},
 \end{align*}
 to \cite[Equation~(5), Section~3.1.1]{KS97}, we obtain 
 \begin{align*}
  F^a E^b 
  &= \sum_{k=0}^{\min \{ a,b \}} \sqbinom{a}{k} \sqbinom{b}{k} [k]! E^{b-k} F^{a-k} \left( \prod_{j=0}^{k-1} \frac{q^{a-b-j} K^{-1} - q^{-a+b+j} K}{\{ 1 \}} \right).
 \end{align*}
 which means that
 \begin{align*}
  F^{(a)} E^b T_\alpha^\beta
  &= \sum_{k=0}^{\min \{ a,b \}} \sqbinom{b}{k} E^{b-k} F^{(a-k)} \left( \prod_{j=0}^{k-1} q^{a-b-j} K^{-1} - q^{-a+b+j} K \right) T_\alpha^\beta \\*
  &= \sum_{k=0}^{\min \{ a,b \}} \sqbinom{b}{k} \{ a-b-\alpha;k \} E^{b-k} F^{(a-k)} T_\alpha^\beta.
 \end{align*} 
 This implies
 \begin{align*}
  T_\alpha^\beta F^{(a)} E^b 
  &= F^{(a)} E^b T_{\alpha+2a-2b}^\beta 
  = \sum_{k=0}^{\min \{ a,b \}} \sqbinom{b}{k} \{ -a+b-\alpha;k \} E^{b-k} F^{(a-k)} T_{\alpha+2a-2b}^\beta \\*
  &= \sum_{k=0}^{\min \{ a,b \}} \sqbinom{b}{k} \{ -a+b-\alpha;k \} T_\alpha^\beta E^{b-k} F^{(a-k)}. \qedhere
 \end{align*}
\end{proof}

\subsection{Ribbon structure and factorizability}\label{S:ribbon_factorizability_sl2}

Next, $\fraku$ supports the structure of a ribbon Hopf $G$-bialgebra. Indeed, an \textit{R-matrix} $R_{\bar{\alpha},\bar{\beta}}^{\frac{\bar{\beta}}{2},\frac{\bar{\alpha}}{2}} \in \fraku_{\bar{\alpha}}^{\frac{\bar{\beta}}{2}} \otimes \fraku_{\bar{\beta}}^{\frac{\bar{\alpha}}{2}}$ is given by
\begin{align}
 R_{\bar{\alpha},\bar{\beta}}^{\frac{\bar{\beta}}{2},\frac{\bar{\alpha}}{2}} 
 &= \sum_{a,b,c=0}^{r'-1} \frac{\{ 1 \}^c}{[c]!} q^{\frac{c(c-1)}{2} - 2 \left( b+\frac{\beta}{2} \right) \left( a+\frac{\alpha}{2} \right)} K^{b+\frac{\beta}{2}} E^c \otimes K^{a+\frac{\alpha}{2}} F^c \label{E:R}
\end{align}
for any pair of representatives $\alpha,\beta \in \C$ of $\bar{\alpha},\bar{\beta} \in G$, as shown in \cite[Theorem~4.2]{GHP22}. The R-matrix can also be rewritten as follows:
\begin{align}
 R_{\bar{\alpha},\bar{\beta}}^{\frac{\bar{\beta}}{2},\frac{\bar{\alpha}}{2}} 
 &= \sum_{b,c=0}^{r'-1} q^{\frac{c(c-1)}{2} - \left( b+\frac{\beta}{2} \right) \alpha} K^{b+\frac{\beta}{2}} E^c \otimes \left( \sum_{a=0}^{r'-1} q^{-(2b+\beta)a} K^{a+\frac{\alpha}{2}} \right) F^{(c)} \nonumber \\
 &= \sum_{b,c=0}^{r'-1} q^{\frac{c(c-1)}{2} - \left( b+\frac{\beta}{2} \right) \alpha} K^{b+\frac{\beta}{2}} E^c \otimes T_{2b+\beta}^{\frac{\alpha}{2}} F^{(c)} \label{E:R_T_right} \\
 &= \sum_{a,c=0}^{r'-1} q^{\frac{c(c-1)}{2} - \left( a+\frac{\alpha}{2} \right) \beta} \left( \sum_{b=0}^{r'-1} q^{-(2a+\alpha)b} K^{b+\frac{\beta}{2}} \right) E^c \otimes K^{a+\frac{\alpha}{2}} F^{(c)} \nonumber \\
 &= \sum_{a,c=0}^{r'-1} q^{\frac{c(c-1)}{2} - \left( a+\frac{\alpha}{2} \right) \beta} T_{2a+\alpha}^{\frac{\beta}{2}} E^c \otimes K^{a+\frac{\alpha}{2}} F^{(c)}. \label{E:R_T_left}
\end{align}

A \textit{pivotal element} $g_{\bar{\alpha}} \in \fraku_{\bar{\alpha}}^0$ is given by 
\[
 g_{\bar{\alpha}} := K^{1-r'},
\]
with compatible \textit{ribbon element} $v_{\bar{\alpha}}^{-\bar{\alpha}} := u_{\bar{\alpha}}^{-\bar{\alpha}} (g_{\bar{\alpha}})^{-1} \in \fraku_{\bar{\alpha}}^{-\bar{\alpha}}$.

Furthermore, a \textit{left integral} $\lambda_\alpha \in (\fraku_{\bar{\alpha}}^0)^*$ is given by
\begin{align*}
 \lambda_\alpha \left( E^\ell F^m K^n \right) &= 
 \delta_{\ell,r'-1} \delta_{m,r'-1} \chi_{\frac{r}{2}\Z} \left( n+1 \right) \frac{\sqrt{r'}[r'-1]!}{\{ 1 \}^{r'-1}} q^{\alpha \left( n+1-r' \right)}, \\
 \lambda_\alpha \left( E^\ell F^{(m)} T_{2n+\alpha}^0 \right) 
 &= \frac{\{ 1 \}^m}{r' [m]!} \sum_{b=0}^{r'-1} q^{-(2n+\alpha)b} \lambda_\alpha \left( E^\ell F^m K^b \right) \\
 &= \delta_{\ell,r'-1} \delta_{m,r'-1} \frac{q^{(2n+\alpha)\left( 1-r' \right)}}{\sqrt{r'}}
\end{align*}
for every $\alpha \in \C$, where we denote by $\chi_A$ the indicator function of $A \subset \C$, defined as
\[
 \chi_A(z) = 
 \begin{cases}
  1 & \mbox{ if } z \in A, \\
  0 & \mbox{ if } z \not\in A.
 \end{cases}
\]
This is a direct consequence of \cite[Theorem~4.2]{GHP22} and Remark~\ref{R:factorizable}.

\begin{remark}
 Similarly, we have
 \begin{align*}
  \lambda_\alpha \left( F^{(\ell)} E^m T_{2n+\alpha}^0 \right) 
  &= \sum_{k=0}^{\min \{ \ell,m \}} \sqbinom{m}{k} \{ \ell-m-2n-\alpha;k \} \lambda_\alpha \left( E^{m-k} F^{(\ell-k)} T_{2n+\alpha}^0 \right) \\
  &= \delta_{\ell,r'-1} \delta_{m,r'-1} \frac{q^{(2n+\alpha)\left( 1-r' \right)}}{\sqrt{r'}}, \\
  \lambda_\alpha \left( F^\ell E^m K^n \right) 
  &= \sum_{a=0}^{r'-1} \frac{q^{(2a+\alpha)n}[\ell]!}{\{ 1 \}^\ell} \lambda_\alpha \left( F^{(\ell)} E^m T_{2a+\alpha}^0 \right) \\
  &= \delta_{\ell,r'-1} \delta_{m,r'-1} \frac{[r'-1]!}{\sqrt{r'} \{ 1 \}^{r'-1}} \sum_{a=0}^{r'-1} q^{(2a+\alpha)n+(2a+\alpha)\left( 1-r' \right)}\  \\
  &= \delta_{\ell,r'-1} \delta_{m,r'-1} \chi_{\frac{r}{2}\Z} \left( n+1 \right) \frac{\sqrt{r'}[r'-1]!}{\{ 1 \}^{r'-1}} q^{\left( n+1-r' \right) \alpha}.
\end{align*}
\end{remark}

\begin{lemma}
 A \textit{two-sided cointegral} $\Lambda^\alpha \in \fraku_0^{\bar{\alpha}}$ is given by
 \begin{align*}
  \Lambda^\alpha := 
  \frac{\{ 1 \}^{r'-1}}{\sqrt{r'} [r'-1]!} \sum_{a=0}^{r'-1} E^{r'-1} F^{r'-1} K^{a+\alpha} = \sqrt{r'} E^{r'-1} F^{(r'-1)} T_0^\alpha
 \end{align*}
 for every $\alpha \in \C$.
\end{lemma}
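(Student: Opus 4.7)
The plan is to directly verify the cointegral axioms~\eqref{E:coint} and \eqref{E:antip-coint} for the proposed $\Lambda^\alpha$. The equality between the two stated expressions is a routine unpacking of the definitions of $F^{(r'-1)}$ and $T_0^\alpha$.

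For the two-sided cointegral identity \eqref{E:coint}, by multiplicativity it suffices to check both $y\Lambda^\alpha = \varepsilon^{\bar{\beta}}(y)\Lambda^{\alpha+\beta}$ and $\Lambda^\alpha y = \varepsilon^{\bar{\beta}}(y)\Lambda^{\alpha+\beta}$ on the algebra generators $y \in \{E, F, K^\beta\}$ of $\fraku_0^{\bar{\beta}}$, where $\varepsilon^0(E) = \varepsilon^0(F) = 0$ and $\varepsilon^{\bar{\beta}}(K^\beta) = 1$. The cases $y = K^\beta$ reduce to $K^\beta T_0^\alpha = T_0^{\alpha+\beta}$ from Lemma~\ref{L:projectors}, once the weight shifts from commuting $K^\beta$ past $E^{r'-1} F^{(r'-1)}$ cancel. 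The identities involving $E$ are direct: $E\Lambda^\alpha = 0$ from $E^{r'} = 0$; and $\Lambda^\alpha E = 0$ follows from commuting $T_0^\alpha$ past $E$ via Lemma~\ref{L:projectors} and then moving $E$ past $F^{(r'-1)}$ via Lemma~\ref{L:commutators} (with $a=1$, $b=r'-1$), where the $k=1$ correction vanishes because $[r'] = 0$ (as $q^{r'} = \pm 1$) and the $k=0$ term produces $E^{r'} = 0$.

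The main obstacle is the $y = F$ case. For $\Lambda^\alpha F$, commuting $T_0^\alpha$ past $F$ and invoking the quantum divided-power identity $F^{(r'-1)} F^{(1)} = [r'] F^{(r')} = 0$ finishes the computation. For $F\Lambda^\alpha$, the key input is the standard commutator identity
\[
 F E^{r'-1} = E^{r'-1} F - [r'-1] E^{r'-2} \frac{q^{r'-2} K - q^{-(r'-2)} K^{-1}}{q - q^{-1}}.
\]
The first resulting summand contributes $F^{r'} = 0$. For the second, after pushing $K^{\pm 1}$ past $F^{r'-1}$ and using the $K^{\pm 1}$-invariance of $\sum_{a=0}^{r'-1} K^{a+\alpha}$ (equivalently $KT_0^\alpha = T_0^\alpha$ in $\fraku_0$) to collapse the sum, the surviving coefficient is $q^{-r'} - q^{r'} = -(q-q^{-1})[r'] = 0$.

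Finally, the antipode condition \eqref{E:antip-coint} follows by computing $S(\Lambda^\alpha)$ with $S(E) = -EK^{-1}$, $S(F) = -KF$, and $S(T_0^\alpha) = T_0^{-\alpha}$ (the last from \eqref{E:antipodes}). Using $(EK^{-1})^{r'-1} = q^{-(r'-1)(r'-2)} E^{r'-1} K^{-(r'-1)}$ and $(KF)^{r'-1} = q^{(r'-1)(r'-2)} K^{r'-1} F^{r'-1}$, all $q$-prefactors and the $K^{\pm(r'-1)}$ factors cancel after reordering, leaving $S(\Lambda^\alpha) = \sqrt{r'}\, T_0^{-\alpha} F^{(r'-1)} E^{r'-1}$. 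Applying Lemma~\ref{L:commutators} with $\alpha = 0$ then yields $F^{(r'-1)} E^{r'-1} T_0^{-\alpha} = E^{r'-1} F^{(r'-1)} T_0^{-\alpha}$, since every correction term carries the prefactor $\{0;k\} = 0$ for $k \geq 1$, and therefore $S(\Lambda^\alpha) = \Lambda^{-\alpha}$.
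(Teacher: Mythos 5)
Your proof is correct, and the overall strategy --- verify the cointegral axioms on the algebra generators $E$, $F$, $K^\beta$ and then check the antipode condition --- matches the paper's, but you do noticeably more computation than necessary. The paper first establishes $S(\Lambda^\alpha) = \Lambda^{-\alpha}$ by rewriting $\Lambda^\alpha = \sqrt{r'}\, F^{(r'-1)} E^{r'-1} T_0^\alpha$ (allowed by Lemma~\ref{L:commutators}, since every correction carries $\{0;k\}=0$) and feeding this into the closed antipode formula \eqref{E:antipodes}; it then verifies only the three one-sided conditions $E\Lambda^\alpha = 0$, $\Lambda^\alpha F = 0$, $K\Lambda^\alpha = \Lambda^\alpha$, each immediate from $E^{r'}=F^{r'}=0$ and Lemma~\ref{L:projectors}, and the remaining conditions $\Lambda^\alpha E = 0$ and $F\Lambda^\alpha = 0$ come for free by applying the anti-homomorphism $S$ to the relations already proved. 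You instead check all six generator conditions directly, which forces you through the commutators $F^{(r'-1)}E$ and $FE^{r'-1}$; these do work out, with the corrections killed by $[r']=0$ exactly as you say, but they are avoidable. Your hand computation of $S(\Lambda^\alpha)$ via $S(E)=-EK^{-1}$ and $S(F)=-KF$ is also fine and simply replaces the paper's appeal to Lemma~\ref{L:coproducts_antipodes}. One small slip: to move a single $E$ past $F^{(r'-1)}$ you need Lemma~\ref{L:commutators} with $a=r'-1$, $b=1$, not $a=1$, $b=r'-1$; the vanishing of the $k=1$ correction via $\{r'\}=\{1\}[r']=0$ is unaffected.
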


\begin{proof}
 Thanks to Lemma~\ref{L:coproducts_antipodes}, we have 
 \[
  S \left( \Lambda^{\alpha} \right) 
  = \sqrt{r'} S \left( F^{(r'-1)} E^{r'-1} T_0^\alpha \right) 
  = \sqrt{r'} T_0^{-\alpha} E^{r'-1} F^{(r'-1)} 
  = \Lambda^{-\alpha}. 
 \]
 Since
 \begin{align*}
  E \Lambda^{\alpha}
  &= 0 
  = \epsilon(E) \Lambda^{\alpha}, \\
  \Lambda^{\alpha} F
  &= 0
  = \epsilon(F) \Lambda^{\alpha}, \\
  K \Lambda^{\alpha}
  &= \Lambda^{\alpha}
  = \epsilon(K) \Lambda^{\alpha},
 \end{align*}
 the claim follows.
\end{proof}

\begin{proposition}\label{P:sl2_is_factorizable}
 If $r'$ is odd, then 
 \[
  D_{\bar{\alpha},0} \left( \lambda_{-\alpha}^{\phantom{0}} \circ S_{\bar{\alpha}}^0 \right) = \Lambda^\alpha.
 \]
\end{proposition}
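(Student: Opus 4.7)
The plan is a direct computation. Unpacking the Drinfeld map,
\[
 D_{\bar{\alpha}, 0}(\lambda_{-\alpha} \circ S_{\bar{\alpha}}^0) = \lambda_{-\alpha}\!\left( S\bigl( (R''_i)_{\bar{\alpha}}^0 \, (R'_j)_{\bar{\alpha}}^0 \bigr) \right) (R'_i)_0^{\bar{\alpha}/2} (R''_j)_0^{\bar{\alpha}/2},
\]
where the $i$-sum uses $R_{0,\bar{\alpha}}^{\bar{\alpha}/2,0}$ and the $j$-sum uses $R_{\bar{\alpha},0}^{0,\bar{\alpha}/2}$, both written in the $T$-projector form of \eqref{E:R_T_right}--\eqref{E:R_T_left}. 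The first key simplification comes from Lemma~\ref{L:projectors}: the commutation $F^{(c_1)} T_{2a_2+\alpha}^0 = T_{2a_2+\alpha-2c_1}^0 F^{(c_1)}$ together with $T$-projector orthogonality collapses the sum over $b_1$ via $T_{2b_1+\alpha}^0 T_{2(a_2-c_1)+\alpha}^0 = \delta_{b_1 \equiv a_2-c_1 \!\!\!\pmod{r'}} \, T_{2b_1+\alpha}^0$, and after commuting $T$ to the right one obtains
\[
 P := (R''_i)_{\bar{\alpha}}^0 (R'_j)_{\bar{\alpha}}^0 = F^{(c_1)} E^{c_2} T_{2a_2+\alpha-2c_2}^0.
\]

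Applying the antipode formula \eqref{E:antipodes} gives $S(P) = (-1)^{c_1+c_2} q^{(c_1-c_2)(c_1+c_2-2a_2-\alpha-1)} T_{2c_2-2a_2-\alpha}^0 E^{c_2} F^{(c_1)}$. The explicit formula for $\lambda_{-\alpha}$ in Subsection~\ref{S:ribbon_factorizability_sl2} forces $c_1 = c_2 = r'-1$, which automatically kills the $(c_1-c_2)$-factor. Multiplying by $(R'_i)_0^{\bar{\alpha}/2} (R''_j)_0^{\bar{\alpha}/2} = K^{b_1+\alpha/2} E^{r'-1} K^{a_2+\alpha/2} F^{(r'-1)}$ and sliding the intermediate $K^{a_2+\alpha/2}$ across $E^{r'-1}$, the $q$-exponents arising from the two R-matrix coefficients, the antipode, the integral formula, and the $K$-sliding combine into a single global factor $q^{-r'(r'-1)}$. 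This is where the hypothesis enters: since $r'$ is odd, $r'-1$ is even, so $q^{-r'(r'-1)} = (q^{r'})^{-(r'-1)} = 1$ in both the case $r$ odd (where $q^{r'} = 1$) and the case $r$ even (where $q^{r'} = -1$).

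The remaining expression is $\frac{1}{\sqrt{r'}} E^{r'-1} F^{(r'-1)} \sum_{a_2=0}^{r'-1} K^{b_1+a_2+\alpha}$ with $b_1 = (a_2 - (r'-1)) \bmod r'$. The exponents $b_1 + a_2$, viewed modulo $r'$, form a permutation of $\{0, 1, \ldots, r'-1\}$: for $a_2 < r'-1$ one has $b_1 + a_2 = 2a_2+1$, and $\gcd(2, r') = 1$ (valid because $r'$ is odd) guarantees that $a_2 \mapsto 2a_2+1 \bmod r'$ is a bijection onto $\{0, \ldots, r'-2\}$, completed by the value $r'-1$ reached at $a_2 = r'-1$. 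Using the relation $K^{r'} = q^{\alpha r/2}$ to reduce each power, the sum reassembles as $\sum_{b=0}^{r'-1} K^{b+\alpha} = r' T_0^\alpha$, so the left-hand side equals $\sqrt{r'} E^{r'-1} F^{(r'-1)} T_0^\alpha = \Lambda^\alpha$.

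The main obstacle is the $q$-exponent bookkeeping: contributions from two R-matrix coefficients, the antipode formula, the integral formula, and the sliding of $K$-powers past $E$ and $F$ must all be tracked simultaneously and shown to collapse to a single global power of $q$ whose triviality relies crucially on the parity hypothesis. A secondary delicate point, relevant when $r$ is even, is verifying that the $K^{r'}$-reductions do not introduce unwanted scalar factors in the final sum once the reindexing is applied; this is again controlled by $\gcd(2, r') = 1$.
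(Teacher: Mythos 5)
Your computation is correct and is essentially the paper's own proof: both unpack the Drinfeld map using the $T$-projector forms \eqref{E:R_T_right}--\eqref{E:R_T_left} of the R-matrix, let the explicit integral force $c_1=c_2=r'-1$ (killing the antipode's $q$-power), check that the remaining exponents collapse to a global $q^{\pm r'(r'-1)}=1$ via the parity of $r'-1$, and resum the $K$-powers into $r'T_0^\alpha$; the paper merely distributes the $T$- and $K$-forms between the two tensor factors the other way around, so its intermediate expressions carry the $K$'s inside the integral and the $T$'s outside. One small correction: the relation needed for the final reduction is $K^{r'}=1$, which holds because the output lies in $\fraku_0^{\bar{\alpha}}$ (lower index $0$, where $K^{\frac{r}{2}}=1$), not $K^{r'}=q^{\frac{\alpha r}{2}}$ as you wrote --- taken literally the latter would insert factors $q^{\frac{\alpha r}{2}}$ on the terms with $b_1+a_2\geqs r'$ and spoil the resummation into $r'T_0^{\alpha}$.
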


\begin{proof}
 \begin{align*} 
  &D_{\bar{\alpha},0} \left( \lambda_{-\alpha}^{\phantom{0}} \circ S_{\bar{\alpha}}^0 \right) \\
  &\hspace*{\parindent} = \sum_{a,b,c,d=0}^{r'-1} q^{\frac{c(c-1)}{2} + \frac{d(d-1)}{2} - (a+b)\alpha} \lambda_{-\alpha}^{\phantom{0}} \left( S_{\bar{\alpha}}^0 \left( K^a F^{(c)} K^b E^d \right) \right) T_{2a}^{\frac{\alpha}{2}} E^c T_{2b}^{\frac{\alpha}{2}} F^{(d)} \\
  &\hspace*{\parindent} = \sum_{a,b,c,d=0}^{r'-1} q^{\frac{c(c-1)}{2} + \frac{d(d-1)}{2} - (a+b)\alpha} \\*
  &\hspace*{2\parindent} \frac{\{ 1 \}^c}{[c]!} q^{ - 2ac + c\alpha} \lambda_{-\alpha}^{\phantom{0}} \left( S_{\bar{\alpha}}^0 \left( F^c K^{a+b} E^d \right) \right) E^c T_{2(a-c)}^{\frac{\alpha}{2}} T_{2b}^{\frac{\alpha}{2}} F^{(d)} \\
  &\hspace*{\parindent} = \sum_{a,b,c,d=0}^{r'-1} q^{\frac{c(c-1)}{2} + \frac{d(d-1)}{2} - (a+b-c)\alpha - 2ac} \frac{\{ 1 \}^c}{[c]!} \\*
  &\hspace*{2\parindent} (-1)^{c+d} q^{c(c-1) - d(d-1)} \lambda_{-\alpha}^{\phantom{0}} \left( E^d K^{-a-b+c-d} F^c \right) \delta_{2(a-c),2b} E^c T_{2b}^\alpha F^{(d)} \\
  &\hspace*{\parindent} = \sum_{b,c,d=0}^{r'-1} (-1)^{c+d} q^{\frac{3c(c-1)}{2} - \frac{d(d-1)}{2} - 2b\alpha - 2(b+c)c} \frac{\{ 1 \}^c}{[c]!} \\*
  &\hspace*{2\parindent} q^{2(2b+d)c - 2d\alpha} \lambda_{-\alpha}^{\phantom{0}} \left( E^d F^c K^{-2b-d} \right) E^c F^{(d)} T_{2(b+d)}^\alpha \\
  &\hspace*{\parindent} = \sqrt{r'} (-1)^{2(r'-1)} q^{r'(r'-1)} E^{r'-1} F^{(r'-1)} T_0^\alpha \\
  &\hspace*{\parindent} = \sqrt{r'} E^{r'-1} F^{(r'-1)} T_0^\alpha. \qedhere
 \end{align*}
\end{proof}

\subsection{Relation with homological representations}\label{S:J_H_is_homological}

Let us suppose from now on that $r \geqs 3$ is odd, and let $\Sigma_g$ be the surface of genus $g$ with one boundary component considered in Section~\ref{S:top_tangles}. In \cite[Theorem~6.1]{DM22}, the first two authors constructed an isomorphism
\begin{equation*}
 \Phi_g^V : \calH_g^{V(r)} \to (\transm{\fraku}_0^0)^{\otimes g},
\end{equation*}
where $\calH_g^{V(r)}$ is a vector space built from the twisted homology of configuration spaces of $\Sigma_g$. This isomorphism was shown to intertwine commuting actions of the mapping class group $\Mod(\Sigma_g)$ and of the small quantum group $\fraku_0^0$. On the one hand, the action of $\Mod(\Sigma_g)$ on $\calH_g^{V(r)}$ is essentially the natural one obtained by lifting homeomorphisms to the corresponding regular covers, up to a correction on coefficients which is analogue, for odd values of $r$, to the one given in \cite[Theorem~D]{BPS21}. On the other hand, the action on $(\transm{\fraku}_0^0)^{\otimes g}$ is the one determined by Lyubashenko's representations \cite[Section~4]{L94}, which are part of the Kerler--Lyubashenko TQFT $J_{H^0_0}$ of Corollary~\ref{C:relation_with_KL} for $H = \fraku$. It turns out that $\calH_g^{V(r)}$ can be deformed using a cohomology class $\cohom \in H^1(\Sigma_g;\C)$, as explained in \cite[Section~6.3.1]{DM22}. The resulting vector space $\calH_g^{\cohom(r)}$ carries an action of the \textit{Torelli subgroup} $\calI(\varSigma_g) < \Mod(\Sigma_g)$ that was conjectured to be isomorphic to the representation that would arise from a construction generalizing Kerler--Lyubashenko TQFTs to cobordisms equipped with cohomology classes, see \cite[Conjecture~6.6]{DM22}. One of the goals of the present work was to build these generalized Kerler--Lyubashenko TQFTs, as a first step towards extending the isomorphism $\Phi_g^V$ to the case of representations depending on cohomology classes. We rephrase \cite[Conjecture~6.6]{DM22} more precisely in this section.

In \cite[Section~6.3.1]{DM22}, the authors define twisted homology groups\footnote{$H^\BM_\ast$ stands for \textit{Borel--Moore homology}.}
\[
 \homol_{n,g}^\formv := H^\BM_n(X_{n,g},Y_{n,g};\phi_{n,g}^\formv),
\]
where
\begin{itemize}
 \item $X_{n,g}$ is the configuration space of $n$ unordered points in $\Sigma_g$ (see \cite[Equation~(1)]{DM22}),
 \item $Y_{n,g}$ is a specific subset of the boundary $\partial X_{n,g}$ (see \cite[Equation~(4)]{DM22}),
 \item $\varphi_{n,g}^\formv : \Z[\pi_{n,g}]  \to \End_{\Z[q,\formv^{\pm 1}]}(W_g^\formv)$ is an action (see \cite[Equation~(31)]{DM22}) of the fundamental group $\pi_{n,g}$ of $X_{n,g}$ onto a module $W_g^\formv$ over a polynomial ring $\Z[q,\formv^{\pm 1}]$.
\end{itemize}
This \textit{twisted} version of the homology of the pair $(X_{n,g},Y_{n,g})$ naturally admits the structure of a $\Z[q,\formv^{\pm 1}]$-module (see \cite[Appendix~A.1]{DM22}).

The polynomial ring appearing above is
\[
\Z[q,\formv^{\pm 1}] := \Z[q,s_1^{\pm 1},t_1^{\pm 1},\ldots,s_g^{\pm 1},t_g^{\pm 1}],
\]
where the formal variables $s_j,t_j$ can be intepreted as the evaluations of a cohomology class against the homology classes $a_j,b_j$ of Section~\ref{S:top_tangles} for every integer $1 \leqs j \leqs g$. Indeed, every cohomology class $\cohom \in H^1(\varSigma_g;\bbC) \cong \Hom_\Z(H_1(\varSigma_g);\bbC)$ determines a ring homomorphism
\begin{align*}
 \Z[q,\formv^{\pm 1}] &\to \bbC \\*
 (s_j,t_j) &\mapsto (q^{\cohom(a_j)},q^{\cohom(b_j)}),
\end{align*}
where $q^z = e^{\frac{2 z \pi \fraki}{r}}$ for every $z \in \C$. If we consider the submodule $\homol^{\formv(r)}_g \subset \homol_{n,g}^\formv$ spanned by \textit{small embedded cycles}, as defined in \cite[Definition~2.3.1]{DM22}, then the vector space
\[
 \homol^{\cohom(r)}_g := \homol^{\formv(r)}_g \otimes_{\Z[q,\formv^{\pm 1}]} \bbC
\]
is endowed with a projective representation
\[
 \bar{\rho}_g^{\cohom(r)} : \calI(\varSigma_g) \to \PGL_{\bbC} \left( \homol^{\cohom(r)}_g \right)
\]
of the Torelli subgroup $\calI(\varSigma_g) \subset \Mod(\Sigma_g)$, as explained in \cite[Section~6.3.1]{DM22}.

Now, let us fix a cohomology class $\cohom \in H^1(\varSigma_g;\bbC)$ and the corresponding object $\bfSigma_{\bar{\cohom}} := (\Sigma_g,\bar{\cohom},\calL_g)$ of $3\Cob^{\C/2\Z}$, where $\bar{\cohom} \in H^1(\varSigma_g;\bbC/2\Z)$ is the image of $\cohom$, and $\calL_g$ is the Lagrangian subspace introduced in Section~\ref{S:handle_surgery_functors}. Let us consider a diffeomorphism $f \in \calI(\Sigma_g)$, and let $\bfSigma_{\bar{\cohom}} \times [0,1]_f := (\varSigma_g \times [0,1]_f,\pi^*(\bar{\cohom}),0)$ be the \textit{mapping cylinder of $f$}, which is a morphism in $3\Cob^{\C/2\Z}$ (see \cite[Section~4.2.3]{DM22} for a precise definition), where $\pi : \varSigma_g \times [0,1] \to \varSigma_g$ is the projection onto the first factor. Notice that $f^*(\bar{\cohom}) = \bar{\cohom}$, since $f \in \calI(\varSigma_g)$. The functor $J_\fraku$ provides a family of projective representations
\begin{align*}
 \bar{\rho}_g^{\fraku,{\bar{\cohom}}} : \calI(\varSigma_g) & \to \PGL_{\bbC}(J_\fraku(\bfSigma_{\bar{\cohom}})) \\*
 f & \to J_\fraku(\bfSigma_{\bar{\cohom}} \times [0,1]_f)
\end{align*}
parametrized by cohomology classes $\bar{\cohom} \in H^1(\varSigma_g;\bbC/2\Z)$ and by roots of unity $q = e^{\frac{2 \pi \fraki}{r}}$. In \cite[Section~4.2.3]{DM22} we explain why this is actually a projective representation, by looking at compositions of elements of $\calI(\varSigma_g)$, and also why it is not in general a linear representation. Notice that, here, the fact that $f$ is an element of the Torelli subgroup is crucial, otherwise $\bfSigma_{\bar{\cohom}} \times [0,1]_f$ would not be an endomorphism of $\bfSigma_{\bar{\cohom}}$. We expect these representations to agree with the homological construction.

\begin{conjecture}\label{C:homology}
 There is an isomorphism
 \[
  \Phi_g^\cohom : \homol^{\cohom(r)}_g \to \fraku_{(\myuline{\bar{\cohom}(a)},\myuline{\bar{\cohom}(b)})}
 \]
 that intertwines the actions of $\calI(\varSigma_g)$ and $\fraku_0^\oplus$, where 
 \[
  (\myuline{\bar{\cohom}(a)},\myuline{\bar{\cohom}(b)}) = (\bar{\cohom}(a_1),\bar{\cohom}(b_1),\ldots,\bar{\cohom}(a_g),\bar{\cohom}(b_g)) \in (\C/2\Z)^{2g}.
 \]
\end{conjecture}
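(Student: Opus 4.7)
The plan is to extend the isomorphism $\Phi_g^V$ of \cite[Theorem~6.1]{DM22} to a $\cohom$-parametrized family by deforming both sides along the cohomology class. On the homology side, $\homol^{\cohom(r)}_g = \homol^{\formv(r)}_g \otimes_{\Z[q,\formv^{\pm 1}]} \C$ via the ring map $s_j \mapsto q^{\cohom(a_j)}$, $t_j \mapsto q^{\cohom(b_j)}$; on the quantum side, $\fraku_{(\myuline{\bar{\cohom}(a)},\myuline{\bar{\cohom}(b)})}$ carries a basis of the form $\bigotimes_{j=1}^g E^{\ell_j} F^{(m_j)} T_{2n_j + \cohom(a_j)}^{\cohom(b_j)}$ built from the integral basis of Section~\ref{S:half-divided_basis}, in which the parameters $q^{\cohom(a_j)}$ and $q^{\cohom(b_j)}$ governing the $G$-grading match precisely the above specialization of $s_j, t_j$. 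I would therefore first construct a universal $\Phi_g^\formv$ over $\Z[q,\formv^{\pm 1}]$ by sending each small embedded cycle in the sense of \cite[Definition~2.3.1]{DM22} to an explicit integral monomial, and then obtain $\Phi_g^\cohom$ by base change to $\C$.

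Bijectivity of $\Phi_g^\cohom$ should follow by comparing ranks of free modules on both sides of $\Phi_g^\formv$ and invoking the fact that $\Phi_g^V$ is known to be an isomorphism at the specialization $s_j = t_j = 1$. Equivariance with respect to the $\fraku_0^\oplus$-action should then essentially reduce to the statement already proved for $\cohom = 0$: for any homogeneous $x \in \fraku_0^\beta$, its action on both sides comes from adjoining a $\beta$-labeled trivial strand to the relevant configuration-space or handlebody picture, so the dependence on $\cohom$ enters only through grading shifts that match by construction. The required bookkeeping would combine the coproduct formulas of Lemma~\ref{L:coproducts_antipodes} with the R-matrix decompositions \eqref{E:R_T_right} and \eqref{E:R_T_left}, which make manifest that the $G$-graded action is a continuous deformation of the small quantum group action already handled in \cite{DM22}.

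The substantial obstacle will be equivariance under the Torelli group $\calI(\varSigma_g)$. On the quantum side, the action of $f \in \calI(\varSigma_g)$ is obtained by running the algorithm of Section~\ref{S:Construction_of_the_functor} on a $G$-labeled top tangle representing the mapping cylinder $\bfSigma_{\bar{\cohom}} \times [0,1]_f$ via Proposition~\ref{P:surgery_equivalence}; in particular it involves evaluations of the left integral $\lambda$ on circle components, the Drinfeld map, and insertions of the ribbon and pivotal elements. On the homology side, $f$ acts as a cover-lifted pushforward on twisted Borel--Moore homology, corrected by a projective cocycle analogous to \cite[Theorem~D]{BPS21}. To bridge these, I would reduce to a finite generating set of $\calI(\varSigma_g)$ (for instance bounding pair maps and separating Dehn twists), write explicit $G$-labeled surgery presentations for their mapping cylinders, and match both computations bead-by-bead against the action on small embedded cycles. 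The delicate ingredient is that evaluating $\lambda$ against a $T$-shifted bead must be identified with the monodromy contribution of the twisted local system along the corresponding homological cycle; establishing this cohomology-sensitive refinement of the Lyubashenko-versus-homology dictionary of \cite{DM22} is what the future paper is meant to accomplish.
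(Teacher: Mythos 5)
This statement is a \emph{conjecture} in the paper: the authors explicitly do not prove it, state that it is ``to be studied in a future paper,'' and even point out that one of its ingredients --- the action of $\fraku_0^\oplus$ on $\homol^{\cohom(r)}_g$ --- ``has not been defined yet.'' Your proposal is therefore not comparable to a proof in the paper; it is a research plan, and read as a proof it has genuine gaps. First, you cannot establish $\fraku_0^\oplus$-equivariance of $\Phi_g^\cohom$ before constructing the $\fraku_0^\oplus$-action on the homological side; saying it ``should essentially reduce to the statement already proved for $\cohom=0$'' presupposes exactly the extension that is missing. Second, your bijectivity argument is invalid as stated: a morphism of free $\Z[q,\formv^{\pm 1}]$-modules whose specialization at $s_j=t_j=1$ is an isomorphism need not be an isomorphism at other specializations, since the determinant is a Laurent polynomial that can vanish away from that point. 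One would need either an explicit computation of the determinant (or of the matrix of $\Phi_g^\formv$ in the bases of small embedded cycles and integral monomials) or a direct construction of an inverse over the Laurent ring.

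Third, and most importantly, the Torelli equivariance --- which you correctly identify as ``the substantial obstacle'' --- is the entire mathematical content of the conjecture, and your proposal only describes the shape of a verification (generators, bead-by-bead matching, identifying $\lambda$-evaluations with monodromy contributions) without carrying out or even reducing any of it. The passage from the surgery-presentation/bead calculus of Section~\ref{S:Construction_of_the_functor} to the cover-lifted action on twisted Borel--Moore homology, including the matching of projective cocycles on both sides, is precisely the ``cohomology-sensitive refinement of the Lyubashenko-versus-homology dictionary'' that you yourself defer to a future paper. So while your outline is consistent with the strategy the authors sketch in Section~\ref{S:J_H_is_homological} (deform $\Phi_g^V$ along $\cohom$, exploit the integral bases of Section~\ref{S:half-divided_basis}, match gradings via $s_j\mapsto q^{\cohom(a_j)}$, $t_j\mapsto q^{\cohom(b_j)}$), it should be presented as a plan of attack, not as a proof.
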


In Conjecture~\ref{C:homology}, the action of $\fraku_0^\oplus$ on $\homol^{\cohom(r)}_g$ has not been defined yet, but should be a natural extension of the action of $\fraku_0^0$ on the twisted homology group of \cite[Theorem~6.1]{DM22}.

This conjecture is motivated by the fact that, when $\cohom$ is the trivial cohomology class, an isomorphism $\Phi_g^V$ was constructed in \cite[Theorem~6.1]{DM22}. We expect all the computations involved in the proof to admit a generalization to the decorated case on both sides of the isomorphism. We expect this conjecture to imply, thanks to the integrality properties of homological representations, that the quantum representations $\bar{\rho}_g^{\fraku,\bar{\cohom}}$ are integral when computed in the integral bases of Section~\ref{S:half-divided_basis}, in the sense that their coefficients can be restricted to the polynomial ring $\Z[q,q^\cohom]$ (defined in \cite[Conjecture~6.6]{DM22}). This would imply a polynomial dependence in $q$ and a polynomial dependence in coefficients of the cohomology class $\cohom$. To the best of our knowledge, this is not known for TQFT functors decorated with cohomology classes. The representations $\bar{\rho}_g^{\fraku,\bar{\cohom}}$ would then form a family that depends polynomially on the cohomology class $\bar{\cohom}$, and that admit both a TQFT formulation, on one hand, and a homological intepretation, on the other.

\end{document}